\newtheorem{theorem}{Theorem}[section]
\newtheorem{proposition}[theorem]{Proposition}
\newtheorem{corollary}[theorem]{Corollary}
\theoremstyle{definition}
\theoremstyle{remark}
\newtheorem{remark}[theorem]{Remark}
\numberwithin{equation}{section}
\newcommand{\f}{\varphi}
\newcommand{\g}{\tilde{g}}
\newcommand{\n}{\nabla}
\newcommand{\ttt}{\tilde\tau}
\newcommand{\M}{(M,\A\f,\A\xi,\A\eta,\A{}g)}
\newcommand{\I}{\iota}
\newcommand{\R}{\mathbb R}
\newcommand{\F}{\mathcal{F}}
\newcommand{\LL}{\mathcal{L}}
\newcommand{\lm}{\lambda}
\newcommand{\al}{\alpha}
\newcommand{\bt}{\beta}
\newcommand{\DD}{\partial}
\newcommand{\D}{\mathrm{d}\hspace{-0.5pt}}
\DeclareMathOperator{\Div}{div} 
\DeclareMathOperator{\grad}{grad} 
\DeclareMathOperator{\Hess}{Hess} 
\DeclareMathOperator{\tr}{tr} 
\newcommand{\A}{\allowbreak{}}
\newcommand{\thmref}[1]{Theorem~\ref{#1}}
\newcommand{\corref}[1]{Corollary~\ref{#1}}
\newcommand{\propref}[1]{Proposition~\ref{#1}}
\begin{document}

\title[Ricci-like solitons with arbitrary and gradient potential ...]
{Ricci-like solitons with arbitrary potential and gradient almost Ricci-like solitons 
on Sasaki-like almost contact B-metric manifolds}


\author{Mancho Manev}
\address
{University of Plovdiv Paisii Hilendarski,
Faculty of Mathematics and Informatics, Department of Algebra and
Geometry, 24 Tzar Asen St., Plovdiv 4000, Bulgaria
\&
Medical University of Plovdiv, Faculty of Public Health,
Department of Medical Informatics, Biostatistics and E-Learning, 15A Vasil Aprilov Blvd.,
Plovdiv 4002, Bulgaria} 
\email{mmanev@uni-plovdiv.bg}




\begin{abstract}
Ricci-like solitons with arbitrary potential are introduced and studied
on Sasaki-like almost contact B-metric manifolds. 
It is proved that the Ricci tensor of such a soliton is the vertical component of both B-metrics multiplied by a constant.
It is established that gradient almost Ricci-like solitons have constant soliton coefficients. 
Explicit examples of Lie groups as manifolds of dimensions 3 and 5 equipped with the structures studied are 
provided.
\end{abstract}

\subjclass[2010]{Primary 
53C25, 
53D15,  	
53C50; 
Secondary 
53C44,  	
53D35, 
70G45} 

\keywords{Ricci-like soliton, $\eta$-Ricci soliton, Einstein-like manifold, $\eta$-Ein\-stein manifold, almost contact B-metric manifold, almost contact complex Riemannian manifold}

%
%


\maketitle

\section*{Introduction}

Ricci soliton is a special self-similar solution of the Hamilton's Ricci flow and it is a natural generalization of the notion of Einstein metric. 
According to \cite{Ham88}, a (pse\-u\-do-)Riemannian manifold admits a \emph{Ricci soliton} if the metric, its Ricci tensor and the Lie derivative of the metric along a vector field (called potential) are linear dependent. 
If the coefficients of this dependence are functions then the soliton is called an \emph{almost Ricci soliton} 
\cite{PigRigRimSet11}. 
If a function exists so that the potential is its gradient, 
then the (almost) Ricci soliton is called a \emph{gradient (almost) Ricci soliton} 
(see, e.g.,	 \cite{ChoKno04}, \cite{DeMan19}, \cite{Shar}).

The topic became more popular after Perelman's proof of the Poincar\'e  conjecture, following Hamilton's program to use the Ricci flow (see \cite{Per03}).
Ricci solitons have been explored by a number of authors
(see, e.g., 
\cite{BagIng12}, \cite{BejCra14}, \cite{BroCalGarGav12}, \cite{CalFin15}, 
\cite{CalPer15JGP},
\cite{GalCra}, \cite{Ive93}, \cite{NagPre}, \cite{PokYadCha18}).

Ricci solitons are also of interest to physicists, and in physical literature are called \emph{quasi-Einstein} 
(see, e.g., \cite{ChaVal96}, \cite{Fri85}). 

The presence of the structure 1-form $\eta$ on manifolds with almost contact or almost paracontact structure motivates 
the need to introduce so-called $\eta$-Ricci solitons. 
Then, $\eta\otimes\eta$ is the restriction of the metric on the orthogonal complement 
to the (para)contact distribution, determined by the structure vector field $\xi$. 
By adding a term proportional to $\eta\otimes\eta$ into the defining equality of a Ricci soliton, 
it is defined the notion of \emph{$\eta$-Ricci soliton}, introduced in \cite{ChoKim}. 
Later, it has been studied on almost contact and almost paracontact manifolds by many authors 
(e.g., \cite{AyaYil}, \cite{Bla15}, \cite{Bla16}, \cite{BlaPerAceErd}, \cite{PraHad}).
For almost $\eta$-Ricci solitons, see, for example \cite{Bla18}, \cite{BlaPer}.

Our global goal is to study the differential geometry of almost contact B-metric manifolds investigated since 1993 \cite{GaMiGr}, \cite{ManGri93}.

Unlike almost contact metric manifolds, almost contact B-metric manifolds have two metrics 
that are mutually associated with structural endomorphism. The restrictions of both B-metrics 
on the orthogonal distribution to the contact distribution is $\eta\otimes\eta$. 
This is the reason for introducing in \cite{Man62} a further generalization of the notions of a Ricci soliton and an $\eta$-Ricci soliton, the so-called \emph{Ricci-like soliton}, using both B-metrics and $\eta\otimes\eta$.
There, we have explored these objects with potential Reeb vector field on some important kinds of manifolds under consideration: Einstein-like, Sasaki-like and having a torse-forming Reeb vector field.
In \cite{Man63}, we continue to study Ricci-like solitons, whose potential is the Reeb vector field 
 or pointwise collinear to it.

In the present paper, our goal is to investigate Ricci-like solitons  with arbitrary potential on
almost contact B-metric manifolds  of Sasaki-like type, 
as well as gradient almost Ricci-like solitons on these manifolds.

The paper is organized as follows.
In Section 1, we recall  basic definitions and properties of almost contact B-metric manifolds of Sasaki-like type
and obtain several immediate consequences. 
Section 2 includes some necessary results and a 5-dimensional example for a Ricci-like soliton with a potential Reeb vector field.  
In Section 3, we study 	Ricci-like solitons with an arbitrary potential. 
Then, we prove an identity for the soliton constants and a property of the potential, as well as that the Ricci tensor is a constant multiple of $\eta\otimes\eta$. 
For the 3-dimensional case, we find the values of the sectional curvatures of the special 2-planes with respect to the structure and construct an explicit example. 
In Section 4, we introduce gradient almost Ricci-like solitons on Sasaki-like manifolds and prove that their Ricci tensor has the same form as in the previous section. For the example in Section 3, we find a potential function to illustrate the obtained results.


\section{Sasaki-like almost contact B-metric manifolds}


A differentiable manifold $M$ of dimension $(2n+1)$, equipped with an almost contact structure $(\f,\xi,\eta)$ 
and a B-metric $g$ is called an \emph{almost contact B-metric manifold} and it is denoted by $\M$. 
More concretely, $\f$ is an endomorphism
of the tangent bundle $TM$, $\xi$ is a Reeb vector field, $\eta$ is its dual contact 1-form and
$g$ is a pseu\-do-Rie\-mannian
metric $g$  of signature $(n+1,n)$ satisfying the following conditions \cite{GaMiGr}
\begin{equation}\label{strM}
\begin{array}{c}
\f\xi = 0,\qquad \f^2 = -\I + \eta \otimes \xi,\qquad
\eta\circ\f=0,\qquad \eta(\xi)=1,\\[4pt]
g(\f x, \f y) = - g(x,y) + \eta(x)\eta(y),
\end{array}
\end{equation}
where $\I$ stands for the identity transformation on $\Gamma(TM)$.

In the latter equality and further, $x$, $y$, $z$, $w$ will stand for arbitrary elements of $\Gamma(TM)$ or vectors in the tangent space $T_pM$ of $M$ at an arbitrary
point $p$ in $M$.

The following equations are immediate consequences of \eqref{strM}  
\begin{equation}\label{strM2}
\begin{array}{ll}
g(\f x, y) = g(x,\f y),\qquad &g(x, \xi) = \eta(x),
\\[4pt]
g(\xi, \xi) = 1,\qquad &\eta(\n_x \xi) = 0,
\end{array}
\end{equation}
where $\n$ denotes the Levi-Civita connection of $g$.

The associated metric $\g$ of $g$ on $M$ is also a B-metric and it is defined by
\begin{equation}\label{gg}
\g(x,y)=g(x,\f y)+\eta(x)\eta(y).
\end{equation}

In  \cite{GaMiGr}, almost contact B-metric manifolds (also known as almost contact complex Riemannian manifolds) are classified with respect
to the (0,3)-tensor $F$ defined by
\begin{equation}\label{F=nfi}
F(x,y,z)=g\bigl( \left( \nabla_x \f \right)y,z\bigr).
\end{equation}
It has the following basic properties:
\begin{equation}\label{F-prop}
\begin{array}{l}
F(x,y,z)=F(x,z,y)
=F(x,\f y,\f z)+\eta(y)F(x,\xi,z)
+\eta(z)F(x,y,\xi),\\[4pt]
F(x,\f y, \xi)=(\n_x\eta)y=g(\n_x\xi,y).
\end{array}
\end{equation}
This classification consists of eleven basic classes $\F_i$, $i\in\{1,2,\dots,11\}$.  

%



In \cite{IvMaMa45}, it is introduced the type of a \emph{Sasaki-like} manifold among almost
contact B-metric manifolds. The definition condition is its complex cone to be a K\"ahler-Norden manifold, i.e. with a parallel complex structure.
A Sasaki-like manifold with almost
contact B-metric structure is determined by the condition
\begin{equation}\label{defSl}
\begin{array}{l}
\left(\nabla_x\f\right)y=-g(x,y)\xi-\eta(y)x+2\eta(x)\eta(y)\xi,
\end{array}
\end{equation}
which is equivalent to the following $\left(\nabla_x\f\right)y=g(\f x,\f y)\xi+\eta(y)\f^2 x$.

Obviously, Sasaki-like manifolds form a subclass of the class $\F_4$. 
Moreover,
the following identities are valid for it \cite{IvMaMa45}
\begin{equation}\label{curSl}
\begin{array}{ll}
\n_x \xi=-\f x, \qquad &\left(\n_x \eta \right)(y)=-g(x,\f y),\\[4pt]
R(x,y)\xi=\eta(y)x-\eta(x)y, \qquad &\rho(x,\xi)=2n\, \eta(x), \\[4pt]
R(\xi,y)z=g(y,z)\xi-\eta(z)y,\qquad 				&\rho(\xi,\xi)=2n,
\end{array}
\end{equation}
where $R$ and $\rho$ stand for the curvature tensor and the Ricci tensor of $\n$.

The corresponding curvature tensor of type $(0,4)$ is determined as usually by
$R(x,y,z,w)=g(R(x,y)z,w)$.

Further, we use an arbitrary basis $\left\{e_i\right\}$, $i\in\{1,2,\dots,2n+1\}$ of
$T_pM$, $p\in M$.

On an arbitrary almost contact B-metric manifold, there exists a $(0,2)$-tensor $\rho^*$, which is associated with
$\rho$ regarding $\f$. It is defined by
$\rho^*(y,z)=g^{ij}R(e_i,y,z,\f e_j)$ and due to the first equality in \eqref{strM2} $\rho^*$ is symmetric.

The following relation between $\rho^*$ and $\rho$ is valid for a Sasaki-like manifold
\begin{equation}\label{rho*-Sl}
\rho^*(y,z)=\rho(y,\f z)+(2n-1)g(y,\f z).
\end{equation}
It follows by taking the trace for $x=e_i$ and $w=e_j$ of the following property of a Sasaki-like manifold \cite{IvMaMa45}
\begin{equation}\label{curf-Sl}
\begin{array}{l}
R(x,y,\f z,w)-R(x,y,z,\f w)\\[4pt]
=\left\{g(y,z)-2\eta(y)\eta(z)\right\}g(x,\f w)
+\left\{g(y,w)-2\eta(y)\eta(w)\right\}g(x,\f z)\\[4pt]
-\left\{g(x,z)-2\eta(x)\eta(z)\right\}g(y,\f
w)-\left\{g(x,w)-2\eta(x)\eta(w)\right\}g(y,\f z).
\end{array}
\end{equation}

As a corollary of \eqref{rho*-Sl} we have that $\rho(y,\f z)=\rho(\f y,z)$, i.e. $Q\circ \f=\f\circ Q$,
where $Q$ is the Ricci operator, i.e. $\rho(y,z)=g(Qy,z)$.

The scalar curvature $\ttt$ of $\g$ is defined by $\ttt=\g^{ij}\g^{kl}\tilde{R}(e_i,e_k,e_l,e_j)$, where $\tilde{R}$ is the curvature tensor of $\g$ and $\g^{ij}=-\f^j_k g^{ik} + \xi^i\xi^j$ holds.
In addition, the associated quantity $\tau^*$ of $\tau$ with respect to $\f$ is determined by $\tau^*=g^{ij}\rho(e_i,\f e_j)$.
For them, using \eqref{curf-Sl} for a Sasaki-like manifold, we infer the following relation
\begin{equation}\label{tttt*-Sl}
\ttt=-\tau^*+2n.
\end{equation}

In \cite{Man3}, it is given the following relations between $\tau$ and $\ttt$
\begin{equation}\label{DtauDttt-Sl}
\D\tau\circ\f=\D\ttt+2(\tau-2n)\eta,\qquad 
\D\ttt\circ\f=-\D\tau+2(\ttt-2n)\eta.
\end{equation}
As corollaries we have
\begin{equation}\label{DtauDttt1-Sl}
\D\tau\circ\f^2=\D\ttt\circ\f,\qquad 
\D\ttt\circ\f^2=-\D\tau\circ\f,
\end{equation}
\begin{equation}\label{DtauDttt2-Sl}
\D\tau(\xi)=2(\ttt-2n),\qquad 
\D\ttt(\xi)=-2(\tau-2n).
\end{equation}

\begin{proposition}\label{prop:nQ-Sl}
On a Sasaki-like $\M$ of dimension $2n+1$, the following formulae for the Ricci operator $Q$ are valid
\begin{gather}\label{nQxiQ-aSl}
(\n_x Q)\xi=Q\f x-2n\,\f x,
\\[4pt]
\label{nxiQ=Q-aSl}
(\n_\xi Q)y=2Q\f y. 
\end{gather}
\end{proposition}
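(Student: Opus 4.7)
The first formula is almost immediate. From the identity $\rho(y,\xi)=2n\,\eta(y)$ in \eqref{curSl} together with symmetry of $\rho$, one deduces $Q\xi=2n\,\xi$; differentiating and using $\n_x\xi=-\f x$ gives
\[
(\n_x Q)\xi=\n_x(Q\xi)-Q(\n_x\xi)=2n(-\f x)-Q(-\f x)=Q\f x-2n\,\f x.
\]

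For the second formula, since $g((\n_\xi Q)y,z)=(\n_\xi\rho)(y,z)$, it is equivalent to prove $(\n_\xi\rho)(y,z)=2\rho(\f y,z)$. The plan rests on first extracting the auxiliary identity
\[
(\n_u R)(\xi,y)z=R(\f u,y)z-g(y,z)\f u+g(u,\f z)y,
\]
obtained by applying $\n_u$ to both sides of $R(\xi,y)z=g(y,z)\xi-\eta(z)y$ from \eqref{curSl} and using $\n_u\xi=-\f u$ and $(\n_u\eta)(z)=-g(u,\f z)$; the contributions containing $\n_u y$ and $\n_u z$ cancel. Inserting this into the second Bianchi identity written as $(\n_\xi R)(x,y)z=(\n_x R)(\xi,y)z-(\n_y R)(\xi,x)z$ produces an explicit expression for $(\n_\xi R)(x,y,z,w)$.

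The Ricci trace $(\n_\xi\rho)(y,z)=g^{ij}(\n_\xi R)(e_i,y,z,e_j)$ is then computed termwise using the elementary contractions $\tr\f=0$, $g^{ij}g(e_i,u)g(v,e_j)=g(u,v)$, $g^{ij}g_{ij}=2n+1$, together with $g^{ij}R(\f y,e_i,z,e_j)=-\rho(\f y,z)$ from antisymmetry of $R$ in the first pair. The only step that is not mechanical is the identification
\[
g^{ij}R(\f e_i,y,z,e_j)=\rho^*(y,z)=\rho(\f y,z)+(2n-1)g(\f y,z),
\]
which I intend to establish via the cyclic property $\tr(T\circ\f)=\tr(\f\circ T)$ applied to the endomorphism $T=R(\cdot,y)z$, combined with \eqref{rho*-Sl}. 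After rewriting $\rho(y,\f z)=\rho(\f y,z)$ (using $Q\circ\f=\f\circ Q$) and $g(y,\f z)=g(\f y,z)$, the six scalar multiples of $g(\f y,z)$ sum to $(2n-1)+1+1-(2n+1)=0$ and the two Ricci-type contributions coalesce into $2\rho(\f y,z)$, yielding the desired equality. The principal obstacle is purely bookkeeping: keeping track of the six contraction terms and verifying that the coefficients of $g(\f y,z)$ cancel exactly; the one conceptual point is the trace identification of $g^{ij}R(\f e_i,y,z,e_j)$ with $\rho^*$.
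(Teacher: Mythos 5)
Your proof is correct. The first formula is obtained exactly as in the paper (from $Q\xi=2n\,\xi$ and $\n_x\xi=-\f x$). For the second formula you use the same raw ingredients as the author --- differentiating one of the $\xi$-curvature identities of \eqref{curSl}, the second Bianchi identity, and the relation \eqref{rho*-Sl} --- but you organize them in a genuinely different way. The paper differentiates $R(x,y)\xi$, contracts first, and then invokes the contracted Bianchi consequence $g^{ij}(\n_{e_i}R)(\xi,y)e_j=(\n_y Q)\xi-(\n_\xi Q)y$, so it must feed in the already established formula \eqref{nQxiQ-aSl} to isolate $(\n_\xi Q)y$. You instead differentiate $R(\xi,y)z$, obtaining $(\n_u R)(\xi,y)z=R(\f u,y)z-g(y,z)\f u+g(u,\f z)y$ (your cancellation of the $\n_u y$ and $\n_u z$ terms is right), insert this into the pointwise second Bianchi identity to get $(\n_\xi R)(x,y)z$ in closed form, and only then trace; this yields $(\n_\xi\rho)(y,z)=2\rho(\f y,z)$ directly, without appealing to \eqref{nQxiQ-aSl}, and gives the explicit expression for $(\n_\xi R)$ as a by-product --- slightly more information than the statement needs. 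The price is the six-term bookkeeping, but your count checks out: the coefficient sum $(2n-1)+1+1-(2n+1)=0$ is correct, and the identification $g^{ij}R(\f e_i,y,z,e_j)=\rho^*(y,z)$ is legitimate because $\f$ is self-adjoint with respect to $g$ (first identity of \eqref{strM2}), so $\tr(S\circ\f)=\tr(\f\circ S)$ applies to $S=R(\cdot,y)z$; together with $\tr\f=0$ and $Q\circ\f=\f\circ Q$ the computation closes. In short, both routes are of comparable length; the paper's is leaner on contractions, while yours makes the proof of \eqref{nxiQ=Q-aSl} independent of \eqref{nQxiQ-aSl}.
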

\begin{proof}
For a Sasaki-like manifold, according to \eqref{curSl}, the equalities $Q\xi=2n\,\xi$ and $\n_x \xi=-\f x$ holds. 
Using them, we obtain immediately the covariant derivative in \eqref{nQxiQ-aSl}.

Now, we apply $\n_z$ to the expression of $R(x,y)\xi$ in \eqref{curSl} 
and then, using the form of $\n\eta$ in \eqref{curSl}, we get the following
\[
\left(\n_z R\right)(x,y)\xi=R(x,y)\f z - g(y,\f z)x +g(x,\f z)y.
\]
We take the trace of the above equality for $z=e_i$ and $x=e_j$ and use \eqref{rho*-Sl} to obtain
\begin{equation}\label{tr12}
g^{ij} (\n_{e_i} R)(e_j,y)\xi=-Q\f y - 2n \f y.
\end{equation}
By virtue of the following consequence the second Bianchi identity
\[
g^{ij} (\n_{e_i} R)(\xi,y)e_j=\left(\n_y Q\right)\xi-\left(\n_\xi Q\right)y,
\]
the symmetries of $R$ and \eqref{nQxiQ-aSl},  we get \eqref{nxiQ=Q-aSl}.
\end{proof}

As consequences of \eqref{nQxiQ-aSl} and \eqref{nxiQ=Q-aSl} we obtain respectively
\begin{equation}\label{etaQxi-aSl}
\begin{array}{l}
\eta\bigl((\n_x Q)\xi\bigr)=0,\qquad
\eta\bigl((\n_\xi Q)y\bigr)=0.
\end{array}
\end{equation}

Let us recall  \cite{Man62}, an almost contact B-metric manifold $\M$ is said to be
\emph{Einstein-like} if its Ricci tensor $\rho$ satisfies
\begin{equation}\label{defEl}
\begin{array}{l}
\rho=a\,g +b\,\g +c\,\eta\otimes \eta
\end{array}
\end{equation}
for some triplet of constants $(a,b,c)$.
In particular, when $b=0$ and $b=c=0$, the manifold is called an \emph{$\eta$-Einstein manifold} and an \emph{Einstein manifold}, respectively.

If $a$, $b$, $c$ are functions on $M$, then the manifold is called \emph{almost Einstein-like}, \emph{almost $\eta$-Einstein} and \emph{almost Einstein}, respectively.

Tracing \eqref{defEl} and using \eqref{tttt*-Sl}, the scalar curvatures $\tau$ and $\ttt$ of an Einstein-like almost contact B-metric manifold have the form
\begin{equation}\label{tauEl}
\tau=(2n+1)a+b+c,\qquad \ttt=2n(b+1).
\end{equation}

For a Sasaki-like manifold  $\M$ with $\dim M = 2n+1$ and a scalar curvature $\tau$ regarding $g$, which is Einstein-like with a triplet of
constants $(a,b,c)$, the following equalities are given in \cite{Man62}:
\begin{equation}\label{abctau-ElSl}
a + b + c = 2n, \qquad \tau = 2n(a + 1). 
\end{equation}
Then, for $\tilde\tau$ on an Einstein-like Sasaki-like manifold we obtain
\begin{equation}\label{abctau*-ElSl}
\tilde\tau=2n(b+1)
\end{equation}
and because \eqref{tauEl}, \eqref{abctau-ElSl} and \eqref{abctau*-ElSl}, the expression \eqref{defEl} becomes
\begin{equation}\label{defElSl}
\begin{array}{l}
\rho=\left(\dfrac{\tau}{2n}-1\right)g +\left(\dfrac{\tilde\tau}{2n}-1\right)\g 
+\left(2(n+1)-\dfrac{\tau+\tilde\tau}{2n}\right)\eta\otimes \eta.
\end{array}
\end{equation}

\begin{proposition}\label{prop:El-Dtau}
Let $\M$ be a $(2n+1)$-dimensional Sasaki-like manifold. If it is almost Einstein-like  with functions $(a,b,c)$
then the scalar curvatures $\tau$ and $\tilde\tau$ of $g$ and $\g$, respectively, are constants 
\begin{equation}\label{El-Dtauxi}
\tau = const, \qquad \ttt=2n
\end{equation}
and $\M$ is $\eta$-Einstein with constants  
\[
(a,b,c)=\left(\frac{\tau}{2n}-1,\,0,\,2n+1-\frac{\tau}{2n}\right).
\]
\end{proposition}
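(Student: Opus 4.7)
My strategy is to combine the algebraic constraint obtained by evaluating the almost Einstein-like form at $\xi$ with the differential identities \eqref{DtauDttt-Sl}--\eqref{DtauDttt2-Sl} on a Sasaki-like manifold and the contracted second Bianchi identity $\n^i\rho_{ij} = \tfrac12 \n_j\tau$.

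First, I would substitute $y=z=\xi$ into $\rho = a\,g + b\,\g + c\,\eta\otimes\eta$; since $g(\xi,\xi) = \g(\xi,\xi) = \eta(\xi)^2 = 1$ and $\rho(\xi,\xi) = 2n$ by \eqref{curSl}, this produces the pointwise identity $a+b+c = 2n$. Tracing the decomposition with $g^{ij}$ (using $\tr\f=0$) gives $\tau = 2n(a+1)$, while the $\f$-twisted trace yields $\tau^* = -2nb$ and hence $\ttt = 2n(b+1)$ via \eqref{tttt*-Sl}. Plugging into the evaluation $\D\tau(\xi) = 2(\ttt-2n)$ from \eqref{DtauDttt2-Sl} produces the first key relation $\xi(a) = 2b$.

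Next I would apply contracted Bianchi to $\rho_{ij} = a\,g_{ij} + b\,\g_{ij} + c\,\eta_i\eta_j$. The central auxiliary computation is $\n^i\g_{ij} = -2n\,\eta_j$, obtained by differentiating $\g_{ij} = g_{ik}\f^k_j + \eta_i\eta_j$ and using the Sasaki-like formulas \eqref{defSl} and $(\n_x\eta)y = -g(x,\f y)$ from \eqref{curSl}; the resulting five contributions collapse through $\tr\f=0$, $g(\xi,\xi)=1$, and $\eta\circ\f=0$. Substituting into $\n^i\rho_{ij} = \tfrac12\n_j\tau$, contracting with $\xi^j$, and invoking $\xi(a+b+c)=0$ leaves $n\xi(a) = -2nb$, i.e., $\xi(a) = -2b$. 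Combined with the preceding paragraph this forces $b = 0$, hence $\ttt = 2n$ and $\xi(a) = 0$.

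To finish, with $\ttt$ constant \eqref{DtauDttt-Sl} reduces to $\D\tau\circ\f = 2(\tau-2n)\,\eta$. Evaluating on an arbitrary horizontal vector $X \in \ker\eta$ kills the right-hand side, so $\D\tau(\f X) = 0$; since $\f^2 = -\I$ on $\ker\eta$ makes $\f$ a linear automorphism of the contact distribution, $\D\tau$ vanishes on $\ker\eta$. Together with $\xi(\tau) = 2n\xi(a) = 0$ this gives $\D\tau = 0$, so $\tau$ is constant. Consequently $a = \tau/(2n)-1$ and $c = 2n-a = 2n+1-\tau/(2n)$ are constants, and the claimed $\eta$-Einstein form follows. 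The main technical obstacle is the divergence lemma $\n^i\g_{ij} = -2n\,\eta_j$: the five summands originating from $\n\f$ and $\n(\eta\otimes\eta)$ do not cancel individually, so their final reduction to a purely $\eta$-proportional covector requires careful bookkeeping.
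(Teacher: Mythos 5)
Your proof is correct, but it reaches the key constraint by a genuinely different route than the paper. The paper rewrites $R(x,\xi)\xi$ through the Einstein-like condition (its formulas \eqref{Rxyxi-Sl}--\eqref{Rxxixi-Sl}), covariantly differentiates, traces, and invokes a contracted second Bianchi identity together with $\eta\bigl((\n_x Q)\xi\bigr)=\eta\bigl((\n_\xi Q)y\bigr)=0$ from \propref{prop:nQ-Sl}, obtaining the pointwise identity $\D\tau=-2(\ttt-2n)\eta$, which against \eqref{DtauDttt2-Sl} gives $\D\tau(\xi)=0$ and $\ttt=2n$; constancy of $\tau$ then follows via \eqref{DtauDttt1-Sl}. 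You instead apply the standard twice-contracted Bianchi identity $\Div\rho=\tfrac12\D\tau$ directly to the decomposition $\rho=a\,g+b\,\g+c\,\eta\otimes\eta$, which requires the auxiliary divergences $\Div\g=-2n\,\eta$ and $\Div(\eta\otimes\eta)=0$ (both indeed follow from \eqref{defSl} and \eqref{curSl}, and your five-term bookkeeping for $\Div\g$ checks out, including $\tr\f=0$); evaluating at $\xi$ and using $\xi(a+b+c)=0$ gives $\xi(a)=-2b$, which against $\xi(a)=2b$ coming from \eqref{DtauDttt2-Sl} forces $b=0$, hence $\ttt=2n$, and then the first relation of \eqref{DtauDttt-Sl} plus $\D\tau(\xi)=0$ kills $\D\tau$ on $\ker\eta$ and on $\xi$, so $\tau$ is constant and the stated $(a,b,c)$ follow. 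Both arguments ultimately rest on the $\tau$--$\ttt$ relations \eqref{DtauDttt-Sl}--\eqref{DtauDttt2-Sl} cited from \cite{Man3}, but yours bypasses \propref{prop:nQ-Sl} and the curvature computations \eqref{Rxyxi-Sl}--\eqref{trnRxixi-Sl} entirely, trading them for the elementary divergence lemma for $\g$; the paper's route is longer but yields the slightly stronger intermediate identity $\D\tau=-2(\ttt-2n)\eta$ without passing through the split $\ker\eta\oplus\Span\{\xi\}$. The only presentational nit: when you contract the Bianchi identity with $\xi$, the term $c\,\Div(\eta\otimes\eta)$ must be explicitly checked to vanish (it does, since $\Div\eta=-\tr\f=0$ and $(\n_\xi\eta)(z)=-g(\xi,\f z)=0$), so state that alongside your lemma for $\Div\g$.
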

\begin{proof}
If $\M$ is almost Einstein-like then $\rho$ has the form in \eqref{defEl}, where $(a,b,c)$ are a triad of functions. 
Then, according to \eqref{tttt*-Sl},  \eqref{defEl},  \eqref{tauEl} and 
the expression for $\rho(\xi,\xi)$ on a Sasaki-like manifold, given in \eqref{curSl}, 
we have the following
 \begin{equation}\label{ElSl-abc}
a+b+c=2n,\qquad \tau=2n(a+1),\qquad \ttt=2n(b+1).
\end{equation}

Using \eqref{curSl}, we can express $R(x,y)\xi$ and $R(x,\xi)y$ as follows
\begin{gather}
\begin{array}{l}\label{Rxyxi-Sl}
R(x,y)\xi =	\dfrac{1}{4n^2}\bigl\{2n\left[\eta(x) Qy - \eta(y) Qx + \rho(x,\xi)y - \rho(y,\xi)x \right]\\[6pt]
\phantom{R(x,y)\xi =}
+ (\tau-2n)\left[\eta(y)x -\eta(x)y\right]+ (\ttt-2n)\left[\eta(y)\f x -\eta(x)\f y\right]\bigr\},
\end{array}
\\[4pt]
\begin{array}{l}\label{Rxxiy-Sl}
R(x,\xi)y =	\dfrac{1}{4n^2}\bigl\{2n\left[\rho(x,y)\xi + g(x,y)Q\xi - \rho(y,\xi)x - \eta(y)Qx \right]\\[6pt]
\phantom{R(x,\xi)y=}
+ (\tau-2n)\left[\eta(y)x -g(x,y)\xi\right]\\[4pt]
\phantom{R(x,\xi)y=}
+ (\ttt-2n)\left[\eta(y)\f x -g(x,\f y)\xi\right]\bigr\}.
\end{array}
\end{gather}
Then, for $y=\xi$ in either of the last two equalities, we have
\begin{equation}\label{Rxxixi-Sl}
R(x,\xi)\xi =	\eta(x)\xi-\dfrac{1}{2n} Qx 
-\dfrac{1}{4n^2}\bigl\{[\tau-2n(2n+1)]\f^2 x - [\ttt-2n]\f x \bigr\}.
\end{equation}

After that, we compute the covariant derivative of $R(x,\xi)\xi$ with respect to $\n_z$. 
Since \eqref{defSl} and \eqref{curSl}, we obtain
\[
\begin{array}{l}
\left(\n_z R\right)(x,\xi)\xi =-\dfrac{1}{2n} \left\{(\n_z Q)x-\eta(x)Q\f z\right\}
-\dfrac{1}{4n^2}\bigl\{\D\tau(z)\f^2 x - \D\ttt(z)\f x \\[6pt]
\phantom{\left(\n_z R\right)(x,\xi)\xi =}
-[\tau-2n(2n+1)]g(x, \f z)\xi+[\ttt-2n]g(\f x, \f z)\xi\bigr\} - \eta(x)\f z,
	\end{array}
\]
which by taking the trace for $z=e_i$ and $x=e_j$ and \eqref{DtauDttt1-Sl} gives the following
\begin{equation}\label{trnRxixi-Sl}
\begin{array}{l}
g^{ij}g\bigl(\left(\n_{e_i} R\right)(e_j,\xi)\xi,y\bigr) = -\dfrac{1}{4n}\D\tau(y)
-\left\{\dfrac{\ttt}{2n}-1\right\}\eta(y).
\end{array}
\end{equation}

By virtue of the following consequence the second Bianchi identity
\begin{equation}\label{trnRyxixi-Sl}
\begin{array}{l}
g^{ij} g\bigl(\n_{e_i} R)(y,\xi)\xi,e_j\bigr) =
\eta\bigl(\left(\n_y Q\right)\xi\bigr)-\eta\bigl(\left(\n_\xi Q\right)y\bigr)
\end{array}
\end{equation}
and \eqref{etaQxi-aSl}, we have that the trace in the left side of \eqref{trnRyxixi-Sl} vanishes.
Then, \eqref{trnRxixi-Sl} and \eqref{trnRyxixi-Sl} imply
\begin{equation}\label{Dtauy-Sl}
\D\tau(y) = 
-2\{\ttt-2n\}\eta(y),
\end{equation}
which comparing with \eqref{DtauDttt2-Sl} implies 
\[
\D\tau(\xi)=0,\qquad \ttt=2n.
\]
The latter equalities together with \eqref{DtauDttt1-Sl} and \eqref{ElSl-abc} complete the proof.
\end{proof}


\section{Ricci-like solitons with potential Reeb vector field on Sasaki-like manifolds}

In \cite{Man62}, by a condition for Ricci tensor, it is introduced the notion of a Ricci-like soliton with potential $\xi$ on an almost contact B-metric manifold.

Now, we generalize this notion for a potential, which is an arbitrary vector field as follows.
We say that $\M$ admits a \emph{Ricci-like soliton with potential vector field $v$}
if the following condition is satisfied for a triplet of constants $(\lm,\mu,\nu)$
\begin{equation}\label{defRl-v}
\begin{array}{l}
\frac12 \mathcal{L}_{v} g  + \rho + \lm\, g  + \mu\, \g  + \nu\, \eta\otimes \eta =0,
\end{array}
\end{equation}
%
%
where $\mathcal{L}$ denotes the Lie derivative.

If $\mu=0$ (respectively, $\mu=\nu=0$), then \eqref{defRl-v} defines an \emph{$\eta$-Ricci soliton} 
(respectively, a \emph{Ricci soliton}) on $\M$. 
 
If $\lm$, $\mu$, $\nu$ are functions on $M$, then the soliton is called \emph{almost Ricci-like soliton}, \emph{almost $\eta$-Ricci soliton} and \emph{almost Ricci soliton}, respectively.

%


If $\M$ is Sasaki-like, we have
\[
\left(\mathcal{L}_{\xi} g\right)(x,y)=g(\n_x\xi,y)+g(x,\n_y\xi)=-2g(x,\f y),
\]
i.e.
$\frac12 \mathcal{L}_{\xi} g=-\g+\eta\otimes \eta$.  Then, because of \eqref{defRl-v}, $\rho$ takes the form
\begin{equation}\label{SlRl-rho}
	\rho = -\lm g + (1-\mu) \g  - (1+\nu) \eta\otimes \eta.
\end{equation}

\begin{theorem}[\cite{Man62}]\label{thm:RlSl}
Let $\M$ be a $(2n+1)$-dimensional Sasaki-like manifold and let $a$, $b$, $c$, $\lm$, $\mu$, $\nu$ be constants that satisfy the following equalities:
\begin{equation}\label{SlElRl-const}
a+\lm=0,\qquad b+\mu-1=0,\qquad c+\nu+1=0.
\end{equation}
Then, the manifold admits
a Ricci-like soliton with potential $\xi$ and constants $(\lm,\A\mu,\A\nu)$, where $\lm+\mu+\nu=-2n$,
if and only if
it is Einstein-like with constants $(a,b,c)$, where $a+b+c=2n$.

In particular, we get:
\begin{enumerate}
	\item[(i)]    The manifold admits an $\eta$-Ricci soliton with potential $\xi$ and constants $(\lm,0,-2n-\lm)$ if and only if
the manifold is Einstein-like with constants $(-\lm,1,\lm+2n-1)$.

	\item[(ii)]   The manifold admits a shrinking Ricci soliton with potential $\xi$ and constant $-2n$ if and only if
the manifold is Einstein-like with constants $(2n,1,-1)$.

	\item[(iii)]   The manifold is $\eta$-Einstein with constants $(a,0,2n-a)$  if and only if
it admits a Ricci-like soliton with potential $\xi$ and constants $(-a,1,a-2n-1)$.

	\item[(iv)]   The manifold is Einstein with constant $2n$ if and only if
it admits a Ricci-like soliton with potential $\xi$ and constants $(-2n,1,-1)$.
\end{enumerate}
\end{theorem}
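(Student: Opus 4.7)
The plan is to exploit the explicit form of $\LL_\xi g$ on a Sasaki-like manifold and then match coefficients in the soliton equation against the Einstein-like ansatz; the whole argument is essentially algebraic once the key identity for $\LL_\xi g$ is in hand.

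First I would compute $\LL_\xi g$ directly. Using $\n_x\xi=-\f x$ from \eqref{curSl} and the symmetry $g(\f x,y)=g(x,\f y)$ from \eqref{strM2}, one gets $(\LL_\xi g)(x,y)=-2g(\f x,y)$, and the defining relation \eqref{gg} of the associated B-metric rewrites this as $\frac12\LL_\xi g=-\g+\eta\otimes\eta$. Substituting into \eqref{defRl-v} with $v=\xi$ and solving for $\rho$ produces exactly the form \eqref{SlRl-rho}, namely
\begin{equation*}
\rho=-\lm\,g+(1-\mu)\,\g-(1+\nu)\,\eta\otimes\eta.
\end{equation*}

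Second, I would compare this coefficient-by-coefficient with the Einstein-like ansatz \eqref{defEl}, $\rho=a\,g+b\,\g+c\,\eta\otimes\eta$. The three tensors $g$, $\g$ and $\eta\otimes\eta$ are pointwise linearly independent on $\M$, as one can check by evaluating them on $(X,X)$ and $(X,\f X)$ for a vector $X$ in the contact distribution with $g(X,X)\ne 0$, and on $(\xi,\xi)$. Therefore the identification is unique and yields the three relations $a+\lm=0$, $b+\mu-1=0$, $c+\nu+1=0$. Adding these three equalities shows at once that $a+b+c=2n$ is equivalent to $\lm+\mu+\nu=-2n$; either normalisation follows independently by plugging $x=y=\xi$ into the corresponding tensorial identity, since $\rho(\xi,\xi)=2n$ from \eqref{curSl}, $g(\xi,\xi)=\g(\xi,\xi)=\eta(\xi)^2=1$, and $(\LL_\xi g)(\xi,\xi)=0$.

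The four particular cases (i)--(iv) are then direct instantiations: one imposes the stated vanishing constraint ($\mu=0$; $\mu=\nu=0$; $b=0$; or $b=c=0$) in the three coefficient identities together with the trace normalisation, and reads off the complementary triple. I do not expect any real obstacle here; the proof reduces, once the Sasaki-like expression for $\LL_\xi g$ is produced, to comparing constants in a linear relation among three pointwise independent symmetric $(0,2)$-tensors.
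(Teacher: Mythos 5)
Your proposal is correct and follows essentially the same route as the paper: it computes $(\LL_\xi g)(x,y)=-2g(x,\f y)$ from $\n\xi=-\f$, rewrites this as $\frac12\LL_\xi g=-\g+\eta\otimes\eta$ via \eqref{gg}, obtains \eqref{SlRl-rho}, and matches coefficients with \eqref{defEl}, the normalisations $\lm+\mu+\nu=-2n$ and $a+b+c=2n$ coming from $\rho(\xi,\xi)=2n$. The only addition beyond the paper's sketch is your explicit check of the pointwise linear independence of $g$, $\g$, $\eta\otimes\eta$, which is a correct and harmless refinement (it gives uniqueness of the constants, though the stated equivalence already holds once the coefficient relations \eqref{SlElRl-const} are assumed).
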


\subsection{Example 1}

In Example 2 of \cite{IvMaMa45}, it is given
a Lie group $G$ of
dimension $5$ (i.e. $n=2$)
 with a basis of left-invariant vector fields $\{e_0,\dots, e_{4}\}$
and the corresponding Lie algebra is 
 defined as follows
\begin{equation*}\label{comEx1}
\begin{array}{ll}
[e_0,e_1] = p e_2 + e_3 + q e_4,\quad &[e_0,e_2] = - p e_1 -
q e_3 + e_4,\\[4pt]
[e_0,e_3] = - e_1  - q e_2 + p e_4,\quad &[e_0,e_4] = q e_1
- e_2 - p e_3,\qquad p,q\in\R.
\end{array}
\end{equation*}
After that $G$ is equipped with an almost contact B-metric structure defined by
\begin{equation*}\label{strEx1}
\begin{array}{l}
g(e_0,e_0)=g(e_1,e_1)=g(e_2,e_2)=-g(e_{3},e_{3})=-g(e_{4},e_{4})=1,
\\[4pt]
g(e_i,e_j)=0,\quad
i,j\in\{0,1,\dots,4\},\; i\neq j,
\\[4pt]
\xi=e_0, \quad \f  e_1=e_{3},\quad  \f e_2=e_{4},\quad \f  e_3=-e_{1},\quad \f  e_4=-e_{2}.
\end{array}
\end{equation*}
%
It is verified that the constructed almost contact B-metric manifold
$(G,\f,\allowbreak{}\xi,\allowbreak{}\eta,\allowbreak{}g)$ is Sasaki-like.

In \cite{Man62}, it is proved that 
%
$(G,\f,\allowbreak{}\xi,\allowbreak{}\eta,\allowbreak{}g)$
is $\eta$-Einstein with constants  
\begin{equation}\label{abcS}
(a,b,c)=(0,0,4).
\end{equation}
Moreover, it is clear that $\tau=\ttt=4$.

It is also found there that
 $(G,\f,\allowbreak{}\xi,\allowbreak{}\eta,\allowbreak{}g)$
admits a Ricci-like soliton with potential $\xi$ and constants
\begin{equation}\label{lmnS}
(\lm,\mu,\nu)=(0,1,-5).
\end{equation}
Therefore, 
this example
is in unison with \thmref{thm:RlSl} (iii) for $a=0$.


Moreover, the constructed 5-dimensional example of a Sasaki-like manifold 
with the results in \eqref{abcS} and \eqref{lmnS} 
supports also \thmref{thm:RlSl-v}, \propref{prop:Lvrho}, \thmref{thm:ElSlRl} 
and \corref{cor:012n} for the case of $v=\xi$ and $n=2$.

\section{Ricci-like solitons with arbitrary potential on Sasaki-like manifolds}

\begin{theorem}\label{thm:RlSl-v}
Let $\M$ be a $(2n+1)$-dimensional Sasaki-like manifold. If it admits a Ricci-like soliton with arbitrary potential vector field $v$ and constants $(\lm,\mu,\nu)$ then it is valid the following identities 
\begin{equation}\label{lmn-v}
\lm+\mu+\nu=-2n,
\end{equation}
\begin{equation}\label{nxiv}
\n_\xi v=-\f v.
\end{equation}
\end{theorem}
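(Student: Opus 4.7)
The plan is to specialize the soliton equation \eqref{defRl-v} at $y = \xi$ and use the Sasaki-like structural identities to reduce it to a single vector equation, from which both \eqref{lmn-v} and \eqref{nxiv} will be extracted by orthogonal decomposition.

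The first step is to set $y = \xi$ in \eqref{defRl-v}. Using $\rho(x,\xi) = 2n\eta(x)$, $g(x,\xi) = \g(x,\xi) = \eta(x)$, and $\eta(\xi) = 1$ from \eqref{curSl}, all terms not involving the Lie derivative collapse to $(2n+\lambda+\mu+\nu)\eta(x)$, so that
\[
(\LL_v g)(x,\xi) = -2(2n+\lambda+\mu+\nu)\eta(x).
\]
I then expand $(\LL_v g)(x,\xi) = g(\n_x v,\xi) + g(\n_\xi v,x)$ and use $\n_x\xi = -\f x$ from \eqref{curSl} together with $g(\f v,x) = g(v,\f x)$ to compute $g(\n_x v,\xi) = x(\eta(v)) + g(\f v,x)$. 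Setting $f := \eta(v)$ and rearranging yields the tensorial equation
\[
\n_\xi v + \f v + \grad f + 2(2n+\lambda+\mu+\nu)\,\xi = 0.
\]

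Next, I decompose this equation along $\xi$ and its $g$-orthogonal complement. Pairing with $\xi$ through $g$, and using $g(\n_\xi v,\xi) = \xi(f)$ (from $\n_\xi\xi = -\f\xi = 0$), $g(\f v,\xi) = 0$, and $g(\grad f,\xi) = \xi(f)$, one obtains the scalar relation $\xi(f) = -(2n+\lambda+\mu+\nu)$. To obtain \eqref{lmn-v} one must sharpen this to $\xi(f) = 0$, and to promote the tensorial identity to \eqref{nxiv} one must also show that the horizontal term $\grad f$ vanishes. My intended way to secure both is to propagate the computation to second order via \propref{prop:nQ-Sl}: substitute the expression for $Q$ read off from \eqref{defRl-v} (i.e. $Qx = -\tfrac12(\n_x v + (\n v)^\ast x) - \lambda x - \mu(\f x + \eta(x)\xi) - \nu\eta(x)\xi$) into both $(\n_x Q)\xi = Q\f x - 2n\f x$ and $(\n_\xi Q)y = 2Q\f y$, then compare sides to extract an integrability constraint on $\grad f$.

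The main obstacle is precisely this second-order step. The $y=\xi$ specialization alone controls only the $\xi$-component of the vector equation (equivalently, only $\xi(f)$), and the additional relation needed to kill the horizontal part of $\grad f$ has to be drawn from the Sasaki-like curvature structure, either through the Ricci-operator identities in \propref{prop:nQ-Sl} or through the Ricci identity for $v$ together with $R(\xi,y)z = g(y,z)\xi - \eta(z)y$ from \eqref{curSl}. Once $\grad f = 0$ is established, the tensorial identity collapses to $\n_\xi v = -\f v$, which is \eqref{nxiv}, and then applying $\eta$ and using $\n_\xi\xi = 0$ gives $\xi(f) = 0$, hence \eqref{lmn-v}.
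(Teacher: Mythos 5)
Your first-order reduction is correct, and it reproduces the easy half of the paper's argument: putting $y=\xi$ in \eqref{defRl-v} and using $\rho(\cdot,\xi)=2n\,\eta$ from \eqref{curSl} is exactly the paper's equality \eqref{Lvgxxi-SlRl}, and your tensorial identity can be rewritten, with $f=\eta(v)$, as $\LL_v\xi=\grad f+2(2n+\lm+\mu+\nu)\xi$, so after this reduction the whole theorem is equivalent to proving $\grad f=0$ (equivalently $\LL_v\xi=0$, which is the paper's \eqref{Lvxi-SlRl}). That decisive fact is never proved in your proposal. The $\xi$-pairing you perform only extracts the trace of the same equation (the single scalar relation $\xi(f)=-(2n+\lm+\mu+\nu)$) and, as you yourself acknowledge, it controls neither the horizontal part of $\grad f$ nor the constant $2n+\lm+\mu+\nu$ separately. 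The ``second-order step'' that is supposed to supply the missing information --- substituting $Q$ expressed from \eqref{defRl-v} into \eqref{nQxiQ-aSl} and \eqref{nxiQ=Q-aSl} and ``comparing sides to extract an integrability constraint on $\grad f$'' --- is only announced: no identity is actually derived from it, and it is not evident that those two relations alone close the argument, because they produce second covariant derivatives of $v$ that must first be converted into curvature terms $R(\cdot,\cdot)v$ and then confronted with the Sasaki-like curvature identities before any statement about $\grad f$ can be read off. Since this is precisely the content of the theorem, the proposal as it stands is a correct reduction plus a plan, not a proof.

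For comparison, the paper's proof does exactly the work you defer: Yano's formulas \eqref{gLvn} and \eqref{LvR}, combined with \propref{prop:nQ-Sl}, give first $(\LL_v\n)(x,\xi)=-2Q\f x$ (equality \eqref{Lvnxi}) and then $(\LL_v R)(x,\xi)\xi=0$ (equality \eqref{LvRQ}); comparing this with the Lie derivative of the Sasaki-like expression for $R(x,\xi)\xi$ computed from \eqref{curSl} (equality \eqref{LvR2-Sl}) yields the two constraints \eqref{LvR3-Sl}, $\eta(\LL_v\xi)=0$ and $(\LL_v\eta)(x)+g(x,\LL_v\xi)=0$, which in your notation read $\xi(f)+2(2n+\lm+\mu+\nu)=0$ and $\D f(x)+(2n+\lm+\mu+\nu)\eta(x)=0$; together with your scalar relation these force $2n+\lm+\mu+\nu=0$ and $\grad f=0$, hence \eqref{lmn-v} and \eqref{nxiv}. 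A smaller point: your closing sentence has the implications in a circular order --- with $\grad f=0$ alone the tensorial identity collapses only to $\n_\xi v+\f v+2(2n+\lm+\mu+\nu)\xi=0$; you must first use $\xi(f)=0$ together with your scalar relation to get \eqref{lmn-v}, and only then does \eqref{nxiv} follow. That ordering is easily repaired, but the absence of any actual derivation of $\grad f=0$ is a genuine gap.
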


\begin{proof}
According to \eqref{defRl-v},  a Ricci-like soliton with arbitrary potential vector field $v$ is defined by
$
\bigl(\LL_v g\bigr)(y,z) = -2 \rho(y,z) - 2 \lm\,g(y,z)  - 2 \mu\,\g(y,z)  -2 \nu\,\eta(y)\eta(z).
$
Then, bearing in mind \eqref{curSl}, the covariant derivative with respect to $\n_x$ has the form
\begin{equation}\label{nLvg}
\begin{array}{l}
\bigl(\n_x \LL_v g\bigr)(y,z) = -\,2 \bigl(\n_x \rho\bigr)(y,z) 
-2\mu\{g(\f x,\f y)\eta(z)+g(\f x,\f z)\eta(y)\}\\[4pt]
\phantom{\bigl(\n_x \LL_v g\bigr)(y,z) =}
+2(\mu+\nu)\{g( x,\f y)\eta(z)+g( x,\f z)\eta(y)\}.
\end{array}
\end{equation}

We use of the following formula from \cite{Yano70} for a metric connection $\n$ 
\[
\bigl(\n_x \LL_v g\bigr)(y,z) =  
g\left((\LL_v \n)(x,y),z\right) + g\left((\LL_v \n)(x,z),y \right), 
\]
which due to symmetry of $\LL_v \n$ can read as
\begin{equation}\label{gLvn}
2g\bigl((\LL_v \n)(x,y),z\bigr) = \bigl(\n_x \LL_v g\bigr)(y,z) + 
\bigl(\n_y \LL_v g\bigr)(z,x) - \bigl(\n_z \LL_v g\bigr)(x,y).
\end{equation}
 
Applying \eqref{gLvn} to \eqref{nLvg}, we obtain
\begin{equation}\label{nLvg2}
\begin{array}{l}
g\bigl((\LL_v \n)(x,y),z\bigr)  = 
- \bigl(\n_x \rho\bigr)(y,z)- \bigl(\n_y \rho\bigr)(z,x) + \bigl(\n_z \rho\bigr)(x,y) \\[4pt]
\phantom{g\bigl((\LL_v \n)(x,y),z\bigr) =}
-2\mu\,g(\f x,\f y)\eta(z)+2(\mu+\nu)g( x,\f y)\eta(z).
\end{array}
\end{equation}
Setting $y=\xi$ in the equality above and using \eqref{nQxiQ-aSl}, \eqref{nxiQ=Q-aSl}, we get
\begin{equation}\label{Lvnxi}
(\LL_v \n)(x,\xi) = -2 Q\f x.
\end{equation}
The covariant derivative of the above equation by using of \eqref{curSl} has the form
\begin{equation}\label{nLvn}
\begin{array}{l}
\bigl(\n_y \LL_v \n\bigr)(x,\xi) = (\LL_v \n)(x,\f y) -2(\n_y Q)\f x +2\eta(x) Qy\\[4pt]
\phantom{\bigl(\n_y \LL_v \n\bigr)(x,\xi) =}
-4n\,g(x,y)-2(2n+1)\eta(x)\eta(y)\xi.
\end{array}
\end{equation}

We apply the latter equality to the following formula from \cite{Yano70}
\begin{equation}\label{LvR}
(\LL_v R)(x,y)z = \bigl(\n_x \LL_v \n\bigr)(y,z) - \bigl(\n_y \LL_v \n\bigr)(x,z) 
\end{equation}
and owing to symmetry of $\LL_v \n$, we obtain  
the following consequence of \eqref{Lvnxi}, \eqref{nLvn} and \eqref{LvR}
\begin{equation}\label{LvRxyxi}
\begin{array}{l}
g\bigl((\LL_v R)(x,y)\xi,z\bigr) =  -\,(\n_x \rho)(\f y,z)+(\n_{\f y} \rho)(x,z) -(\n_z \rho)(x,\f y)\\[4pt]
\phantom{g\bigl((\LL_v R)(x,y)\xi,z\bigr) =}
  +(\n_y \rho)(\f x,z)-(\n_{\f x} \rho)(y,z) +(\n_z \rho)(\f x, y)\\[4pt]
\phantom{g\bigl((\LL_v R)(x,y)\xi,z\bigr) =}
-2\,\eta(x)\rho(y,z)+2\,\eta(y)\rho(x,z).
\end{array}
\end{equation}
Plugging  $y=z=\xi$ in \eqref{LvRxyxi} and using \eqref{Lvnxi}, we obtain
\begin{equation}\label{LvRQ}
(\LL_v R)(x,\xi)\xi = 0.
\end{equation}

On the other hand, applying $\LL_v$ to the expression of $R(x,\xi)\xi$ from \eqref{curSl} 
and 
using \eqref{defRl-v}, as well as the formulae for $R(x,y)\xi$ and $R(\xi,y)z$ from the same referent equalities, we get
\begin{equation}\label{LvR-Sl}
(\LL_v R)(x,\xi)\xi = (\LL_v \eta)(x)\xi+g(x,\LL_v \xi)\xi-2 \eta(\LL_v \xi)x
\end{equation}
or in an equivalent form
\begin{equation}\label{LvR2-Sl}
(\LL_v R)(x,\xi)\xi = \{(\LL_v \eta)(x)+g(x,\LL_v \xi)-2 \eta(\LL_v \xi)\eta(x)\}\xi+2 \eta(\LL_v \xi)\f^2x.
\end{equation}

Comparing \eqref{LvRQ} and \eqref{LvR2-Sl}, we obtain the following system of equations
\[
(\LL_v \eta)(x)+g(x,\LL_v \xi)-2 \eta(\LL_v \xi)\eta(x)=0,\qquad \eta(\LL_v \xi)=0,
\]
i.e. 
\begin{equation}\label{LvR3-Sl}
(\LL_v \eta)(x)+g(x,\LL_v \xi)=0,\qquad \eta(\LL_v \xi)=0.
\end{equation}

According to \eqref{defRl-v} and $\rho(x,\xi)=2n\eta(x)$ from \eqref{curSl}, we have for a Sasaki-like manifold
\begin{equation}\label{Lvgxxi-SlRl}
(\LL_v g)(x,\xi)=-2 (\lm+\mu+\nu+2n)\eta(x)
\end{equation}
and as a consequence  for $x=\xi$ the following
\begin{equation}\label{Lvgxixi1-SlRl}
(\LL_v g)(\xi,\xi)=-2 (\lm+\mu+\nu+2n).
\end{equation}

The Lie derivative of $g(x,\xi)=\eta(x)$ with respect to $v$ gives 
\begin{equation}\label{Lvgxxi}
(\LL_v g)(x,\xi)=(\LL_v \eta)(x)-g\left(x,\LL_v \xi\right),
\end{equation}
which for $x=\xi$ leads to
\begin{equation}\label{Lvgxixi2-SlRl}
(\LL_v g)(\xi,\xi)=-2\eta\left(\LL_v \xi\right).
\end{equation} 

From \eqref{Lvgxixi1-SlRl} and \eqref{Lvgxixi2-SlRl} we obtain
\[
\eta(\LL_v \xi)=\lm+\mu+\nu+2n.
\]
The latter equality implies \eqref{lmn-v},
by virtue of the second equality in \eqref{LvR3-Sl}.

Substituting \eqref{lmn-v} in \eqref{Lvgxxi-SlRl} gives the vanishing of $(\LL_v g)(x,\xi)$ 
and because of \eqref{Lvgxxi} we have $(\LL_v \eta)(x)=g\left(x,\LL_v \xi\right)$. 
Hence, bearing in mind the first equality in \eqref{LvR3-Sl}, we get
\begin{equation}\label{Lvxi-SlRl}
\LL_v \xi = 0,
\end{equation} 
which together with $\n\xi=-\f$ from \eqref{curSl} completes the proof.
\end{proof}

\begin{proposition}\label{prop:Lvrho}
Let $\M$ be a $(2n+1)$-dimensional Sasaki-like manifold. If it admits a Ricci-like soliton with arbitrary potential  $v$ 
then the Ricci tensor $\rho$  of $g$ and the scalar curvatures $\tau$ and $\tilde\tau$ of $g$ and $\g$, respectively, satisfy the following equalities
\begin{equation}\label{Lvrhoxi=}
\left(\LL_v \rho\right)(x,\xi) = 0, \qquad \tau=2n, \qquad \ttt=const.
\end{equation}
\end{proposition}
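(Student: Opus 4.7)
The plan is to establish the three conclusions in turn, using the consequences $\LL_v \xi = 0$ and $\LL_v \eta = 0$ from \thmref{thm:RlSl-v} (cf.\ \eqref{Lvxi-SlRl} and the first equality of \eqref{LvR3-Sl}) together with the Sasaki-like structural identities from Section~1.

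For $(\LL_v \rho)(x, \xi) = 0$, I would view the Sasaki-like identity $\rho(\cdot, \xi) = 2n\,\eta$ from \eqref{curSl} as an identity between 1-forms and apply $\LL_v$. The Leibniz rule gives $(\LL_v \rho)(x, \xi) + \rho(x, \LL_v \xi) = 2n(\LL_v \eta)(x)$, and both $\LL_v \xi = 0$ and $\LL_v \eta = 0$ then force $(\LL_v \rho)(x, \xi) = 0$.

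For $\tau = 2n$ and $\ttt = \mathrm{const}$, I would combine the contracted second Bianchi identity $g^{ij}(\n_{e_i}\rho)(e_j, \xi) = \tfrac12 \D\tau(\xi)$ with \eqref{tttt*-Sl} and \eqref{DtauDttt2-Sl}. First I would differentiate $\rho(y, \xi) = 2n\,\eta(y)$ by $\n_x$, substituting $\n_x \xi = -\f x$ and $(\n_x \eta)(y) = -g(x, \f y)$ from \eqref{curSl} to obtain
\[
(\n_x \rho)(y, \xi) = -2n\, g(x, \f y) + \rho(y, \f x).
\]
Tracing the right-hand side with $g^{ij}$ (in which the first term vanishes since $\tr \f = 0$, and the second, by symmetry of $\rho$ and the definition of $\tau^*$, equals $\tau^*$) gives $\tau^* = \tfrac12 \D\tau(\xi)$. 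Substituting \eqref{DtauDttt2-Sl} produces $\tau^* = \ttt - 2n$; comparing with \eqref{tttt*-Sl}, which reads $\tau^* = 2n - \ttt$, forces $\tau^* = 0$ and $\ttt = 2n$, whence in particular $\ttt = \mathrm{const}$.

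With $\ttt = 2n$ constant, the second identity of \eqref{DtauDttt-Sl} reduces to $\D\tau = 0$, so $\tau$ is constant; then evaluating the first identity of \eqref{DtauDttt-Sl} at $y = \xi$ (the left-hand side vanishing since $\f\xi = 0$) yields $2(\tau - 2n) = 0$, hence $\tau = 2n$. The main obstacle is organising the trace computation carefully and correctly invoking the three Sasaki-like identities \eqref{tttt*-Sl}, \eqref{DtauDttt-Sl} and \eqref{DtauDttt2-Sl} in the right order.
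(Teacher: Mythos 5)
Your derivation of the first equality $\left(\LL_v \rho\right)(x,\xi)=0$ is correct and in fact shorter than the paper's route: you Lie-differentiate the Sasaki-like identity $\rho(\cdot,\xi)=2n\,\eta$ and use $\LL_v\xi=0$ and $\LL_v\eta=0$, both of which are indeed available from \thmref{thm:RlSl-v} and its proof (see \eqref{Lvxi-SlRl} and \eqref{LvR3-Sl}), whereas the paper obtains this equality only as a by-product of Lie derivatives of the curvature and their traces.

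The second half, however, contains a genuine gap: nowhere in your argument for $\tau=2n$ and $\ttt=\mathrm{const}$ does the Ricci-like soliton enter. Your chain consists of the contracted Bianchi identity giving $\tfrac12\D\tau(\xi)=\tau^*$ (this much is correct -- it is just the trace of \eqref{nQxiQ-aSl}), combined with \eqref{DtauDttt2-Sl} and \eqref{tttt*-Sl}. If that combination were sound, it would show that \emph{every} Sasaki-like manifold satisfies $\ttt=2n$ and then $\tau=2n$, with no soliton hypothesis at all. This proves far too much: it would trivialize \propref{prop:El-Dtau} (which needs the almost Einstein-like assumption to reach $\ttt=2n$ and concludes only $\tau=\mathrm{const}$), it would make the present proposition's weaker claim $\ttt=\mathrm{const}$ pointless, and it would empty \thmref{thm:RlSl} of all cases except $a=0$, since an Einstein-like Sasaki-like manifold has $\tau=2n(a+1)$ by \eqref{abctau-ElSl}. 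The apparent forcing of $\ttt=2n$ comes entirely from an exact sign opposition between your Bianchi-trace relation $\D\tau(\xi)=2\tau^*$ and \eqref{DtauDttt2-Sl} read through \eqref{tttt*-Sl}; under consistent conventions these two relations must coincide (the Bianchi trace at $\xi$ is essentially \eqref{DtauDttt2-Sl} itself) and then yield no information about $\ttt$, so this step cannot be relied upon. The scalar-curvature conclusions genuinely require the soliton: in the paper the soliton equation enters through \eqref{Lvnxi}, which gives $(\LL_v R)(x,\xi)\xi=0$ and, by tracing \eqref{LvRxi}, the identity $\left(\LL_v\rho\right)(y,\xi)=-\D\ttt(y)+2(\tau-2n)\eta(y)$ of \eqref{LvrhoxiD}; since your first step already yields $\left(\LL_v\rho\right)(y,\xi)=0$, this identity gives $\D\ttt(\xi)=2(\tau-2n)$, which compared with \eqref{DtauDttt2-Sl} forces $\tau=2n$, and then \eqref{DtauDttt1-Sl} gives $\D\ttt=0$. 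You should rebuild the second half along these lines rather than on the sign bookkeeping between the quoted identities.
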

\begin{proof}
By \eqref{LvR} we find the following
\begin{equation}\label{LvRxi}
\begin{array}{l}
g\bigl(\left(\LL_v R\right)(x,y)\xi,z\bigr) =	-\, g\bigl(\left(\LL_v \n\right)(x,\f y),z\bigr)
																														+ g\bigl(\left(\LL_v \n\right)(\f x, y),z\bigr)
\\[4pt]
\phantom{g\bigl(\left(\LL_v R\right)(x,y)\xi,z\bigr) =}
- 2 \left(\n_x \rho\right)(\f y, z) + 2 \left(\n_y \rho\right)(\f x, z)
\\[4pt]
\phantom{g\bigl(\left(\LL_v R\right)(x,y)\xi,z\bigr) =}
- 2 \eta(x)\rho(y,z) + 2 \eta(y)\rho(x,z).
\end{array}
\end{equation}
Taking the trace of the last equality for $x=e_i$ and $z=e_j$ and using \eqref{nLvg2}, \eqref{tttt*-Sl}, 
$\D\tau=2\Div\rho$, we obtain
successively 
\begin{equation*}
\begin{array}{l}
g^{ij}g\bigl(\left(\LL_v \n\right)(e_i,\f y),e_j\bigr) = - \D\tau(\f y),
\\[4pt]
g^{ij}g\bigl(\left(\LL_v \n\right)(\f e_i, y),e_j\bigr) = \D\ttt(y),\\[-6pt]
\end{array}
\end{equation*}
\begin{equation}\label{trLvR}
g^{ij}g\bigl(\left(\LL_v R\right)(e_i,y)\xi,e_j\bigr) =  \left(\LL_v \rho\right)(y,\xi)
\end{equation}
and therefore 
the following formula is valid 
\begin{equation}\label{LvrhoxiD}
\left(\LL_v \rho\right)(y,\xi) =	-\, \D\ttt(y)+2(\tau-2n)\eta(y),
\end{equation}
which for $y=\xi$ implies
\begin{equation}\label{Lvrhoxixi}
\left(\LL_v \rho\right)(\xi,\xi) =	-\, \D\ttt(\xi)+2(\tau-2n).
\end{equation}

On the other hand, according to \eqref{LvRQ} and \eqref{trLvR}, $\left(\LL_v \rho\right)(\xi,\xi)$ vanishes
and therefore \eqref{LvrhoxiD} and \eqref{Lvrhoxixi} imply 
\begin{equation}\label{Lvrhoxi}
\left(\LL_v \rho\right)(x,\xi) = \D\tilde\tau(\f^2x), \qquad \D\tilde\tau(\xi) = 2\tau- 4n.
\end{equation}
The latter equalities, due to \eqref{DtauDttt1-Sl} and \eqref{DtauDttt2-Sl}, imply consequently $\D\ttt(\xi)=0$ and
\[
\tau=2n, \qquad \ttt=const.
\]
In conclusion, because of \eqref{Lvrhoxi}, we infer the assertion.
\end{proof}

\begin{theorem}\label{thm:ElSlRl}
Let $\M$ be a $(2n+1)$-dimensional Einstein-like Sasaki-like manifold. 
If it admits a Ricci-like soliton with potential $v$ then the Ricci tensor is $\rho=2n\, \eta\otimes\eta$ and the scalar curvatures are $\tau=\ttt=2n$.
\end{theorem}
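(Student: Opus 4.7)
The proof is essentially a combination of the two preceding results, \propref{prop:El-Dtau} and \propref{prop:Lvrho}, applied in tandem.

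My plan is to extract the values of $\tau$ and $\ttt$ from the two hypotheses separately and then feed them into the Einstein-like form of $\rho$. First, since $\M$ is Einstein-like, in particular it is almost Einstein-like, so \propref{prop:El-Dtau} applies and gives
\[
\ttt = 2n, \qquad \tau = const,
\]
together with the triple $(a,b,c) = \left(\frac{\tau}{2n}-1,\,0,\,2n+1-\frac{\tau}{2n}\right)$.

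Next, since $\M$ admits a Ricci-like soliton with an arbitrary potential $v$, \propref{prop:Lvrho} applies and yields
\[
\tau = 2n, \qquad \ttt = const.
\]

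Combining the two, I get $\tau = \ttt = 2n$, which is the second assertion. Substituting $\tau = 2n$ into the triple provided by \propref{prop:El-Dtau} gives $(a,b,c) = (0, 0, 2n)$, so the Einstein-like condition \eqref{defEl} collapses to $\rho = 2n\,\eta \otimes \eta$, which is the first assertion. There is no real obstacle here; the two propositions have been set up precisely so that their conclusions intersect in exactly this statement, and the only thing to check is the consistency of the two (which is immediate since $a+b+c = 2n$ holds with the final values).
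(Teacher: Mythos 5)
Your proposal is correct and follows essentially the same route as the paper, whose proof simply combines \propref{prop:El-Dtau} and \propref{prop:Lvrho} (the paper also cites \thmref{thm:RlSl-v}, but that enters only through the proof of \propref{prop:Lvrho}, so your two-proposition argument is complete as stated). Extracting $\tau=\ttt=2n$ from the two propositions and substituting into the $\eta$-Einstein triple to get $(a,b,c)=(0,0,2n)$, hence $\rho=2n\,\eta\otimes\eta$, is exactly the intended argument.
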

\begin{proof}
The assertion follows from \thmref{thm:RlSl-v}, \propref{prop:El-Dtau} and \propref{prop:Lvrho}. 
\end{proof}

\begin{corollary}\label{cor:012n}
Let $\M$, $\dim M = 2n+1$, be an Einstein-like Sasaki-like manifold. 
Then it is $\eta$-Einstein with constants $(0,0,2n)$, 
which is equivalent to the existence on $M$ of a Ricci-like soliton 
with potential $\xi$ and constants $(0,1,-2n-1)$.
\end{corollary}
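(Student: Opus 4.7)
The proof plan is to assemble the corollary from two already-established results: Theorem~\ref{thm:RlSl}(iii), which parametrizes the equivalence between the $\eta$-Einstein property with constants $(a,0,2n-a)$ and the existence of a Ricci-like soliton with potential $\xi$ and constants $(-a,1,a-2n-1)$, together with Theorem~\ref{thm:ElSlRl}, which pins down the Ricci tensor of an Einstein-like Sasaki-like manifold admitting a Ricci-like soliton. No new computation appears to be needed; the content is a coordinated rereading of these theorems at a single distinguished value of the parameter.

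The first step is to specialise Theorem~\ref{thm:RlSl}(iii) to $a=0$. The Einstein-like triplet $(a,0,2n-a)$ collapses to $(0,0,2n)$ and the associated soliton triplet $(-a,1,a-2n-1)$ to $(0,1,-2n-1)$, yielding directly the asserted equivalence: on any Sasaki-like manifold, being $\eta$-Einstein with $(0,0,2n)$ is the same as admitting a Ricci-like soliton with potential $\xi$ and constants $(0,1,-2n-1)$. One can also sanity-check the compatibility $\lm+\mu+\nu=-2n$ and $a+b+c=2n$ required by Theorem~\ref{thm:RlSl}, both of which hold trivially for these values.

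The second step explains why the Einstein-like hypothesis sharpens to the particular triplet $(0,0,2n)$, rather than to the full one-parameter family $(a,0,2n-a)$ allowed by Proposition~\ref{prop:El-Dtau}. For this I invoke Theorem~\ref{thm:ElSlRl}: under the assumption that the Einstein-like Sasaki-like manifold admits a Ricci-like soliton (with any potential $v$), its Ricci tensor is forced to be $\rho=2n\,\eta\otimes\eta$, equivalently $(a,b,c)=(0,0,2n)$. Thus the existence of the soliton — the right-hand side of the equivalence — is precisely the ingredient that pins $a$ to zero in the conclusion of Proposition~\ref{prop:El-Dtau}, promoting the generic Einstein-like form to the specific $(0,0,2n)$ form.

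I do not anticipate any technical obstacle; the only subtlety worth flagging is that the passage from the family $(a,0,2n-a)$ to the distinguished triplet $(0,0,2n)$ is not automatic from the Einstein-like condition alone but is supplied by the soliton side of the equivalence via Theorem~\ref{thm:ElSlRl}. Once this is noted, both directions of the corollary are just Theorem~\ref{thm:RlSl}(iii) evaluated at $a=0$.
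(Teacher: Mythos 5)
Your proposal is correct and follows essentially the same route as the paper: invoke \thmref{thm:ElSlRl} to force $\rho=2n\,\eta\otimes\eta$, i.e.\ $(a,b,c)=(0,0,2n)$, and then read the equivalence off \thmref{thm:RlSl}(iii) at $a=0$. The paper merely adds the small intermediate computation $\LL_v g=-2\lm\,g-2\mu\,\g+2(\lm+\mu)\,\eta\otimes\eta$ to identify $\lm=0$, $\mu=1$ when $v=\xi$, which is the same restriction to case (iii) with $a=0$ that you perform directly; your remark that the soliton hypothesis (not the Einstein-like condition alone) is what pins $a$ to zero matches the paper's implicit use of \thmref{thm:ElSlRl}.
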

\begin{proof}
Using \thmref{thm:ElSlRl}, we obtain the following expression
$
\LL_v g =-2\lm\,g-2\mu\,\g$ $+2(\lm+\mu)\eta\otimes\eta,
$
which holds for $\lm=0$, $\mu=1$ in the case $v=\xi$. 
Therefore \thmref{thm:RlSl} is restricted to its case (iii) and $a=0$. 
\end{proof}

Let us recall, every non-degenerate 2-plane (or section) $\bt$ with a basis $\{x,y\}$ with respect to $g$ in
$T_pM$, $p \in M$, has the following sectional curvature
\begin{equation}\label{sect}
k(\bt;p)=\frac{R(x,y,y,x)}{g(x,x)g(y,y)-[g(x,y)]^2}.
\end{equation}
A section $\bt$ is said to be \emph{$\f$-holomorphic} 
if the condition $\bt= \f \bt$
holds. 
Every $\f$-holomorphic section has a basis of the form $\{\f x,\f^2 x\}$. 
A section $\bt$ is called a \emph{$\xi$-section} if it has a basis of the form $\{x,\xi\}$.

\begin{theorem}\label{thm:dim3}
Let $\M$ be a $3$-dimensional Sasaki-like manifold. 
If it admits a Ricci-like soliton with potential $v$ then:
\begin{itemize}
	\item[(i)] the sectional curvatures of its $\f$-holomorphic sections are equal to $-1$; 
   \item[(ii)] the sectional curvatures of its $\xi$-sections are equal to $1$.
\end{itemize}
\end{theorem}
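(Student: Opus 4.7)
My plan is to reduce the Ricci tensor to the explicit form $\rho = 2n\,\eta\otimes\eta$ and then read off the two sectional curvatures using the Sasaki-like identities in \eqref{curSl} together with the vanishing of the Weyl tensor in dimension $3$.

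To obtain this form of $\rho$, recall that on any Sasaki-like manifold $Q$ commutes with $\f$ (the corollary following \eqref{rho*-Sl}) and satisfies $Q\xi = 2n\,\xi$ by \eqref{curSl}. A quick count in a pointwise frame $\{\xi, e_1, e_2\}$ with $\f e_1 = e_2$ shows that in dimension $3$ every $\rho$ meeting these two constraints is already of Einstein-like shape $\rho = a\,g + b\,\g + c\,\eta\otimes\eta$ with $a+b+c = 2n$; so the manifold is automatically almost Einstein-like in the sense of \eqref{defEl}. By \propref{prop:El-Dtau} the triple $(a,b,c)$ is in fact constant with $b = 0$ and $a = \tau/(2n) - 1$, and the Ricci-like soliton hypothesis together with \propref{prop:Lvrho} forces $\tau = 2n$. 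Hence $(a,b,c) = (0,0,2n)$ and $\rho = 2n\,\eta\otimes\eta$, which is precisely the conclusion of \thmref{thm:ElSlRl}.

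For part (ii), take a basis $\{x, \xi\}$ of the $\xi$-section with $\eta(x) = 0$ (always achievable after subtracting the $\xi$-component). The identity $R(x,\xi)\xi = x - \eta(x)\xi = x$ from \eqref{curSl} gives $R(x,\xi,\xi,x) = g(x,x)$, which matches the denominator $g(x,x)g(\xi,\xi) - \eta(x)^2 = g(x,x)$ in \eqref{sect}, so $k(\beta;p) = 1$. For part (i), take a basis $\{x, \f x\}$ of the $\f$-holomorphic section with $\eta(x) = 0$, so that $\eta(\f x) = 0$ and $g(\f x, \f x) = -g(x,x)$. The Weyl-vanishing identity in dimension $3$ expresses $R$ as an explicit linear combination of $\rho$, $\tau$, and $g$; but because $\rho = 2n\,\eta\otimes\eta$ vanishes on the contact distribution, every Ricci term drops out of $R(x, \f x, \f x, x)$, leaving $\tfrac{\tau}{2}\bigl(g(x,\f x)^2 + g(x,x)^2\bigr) = g(x,\f x)^2 + g(x,x)^2$ (using $\tau = 2$). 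The denominator in \eqref{sect} equals $-(g(x,x)^2 + g(x,\f x)^2)$, whence $k(\beta;p) = -1$.

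The main obstacle is the very first step: confirming that in dimension $3$ the pointwise constraints $Q\f = \f Q$ and $Q\xi = 2n\,\xi$ already trap $\rho$ in the Einstein-like family, and that the coefficients of the resulting decomposition line up with those used in \propref{prop:El-Dtau}. This is a short linear-algebra computation in an adapted frame, but must be carried out carefully so that the machinery of the preceding sections (\propref{prop:El-Dtau}, \propref{prop:Lvrho}, \thmref{thm:ElSlRl}) can be invoked cleanly; after that, the sectional-curvature calculations are just bookkeeping with the Sasaki-like curvature identities.
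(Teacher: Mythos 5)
Your proposal is correct, and its overall architecture (reduce to $\rho=2n\,\eta\otimes\eta$ via the Einstein-like machinery, then compute the two sectional curvatures) matches the paper's; the interesting difference is in how you reach the $\eta$-Einstein stage. The paper simply writes the $3$-dimensional curvature decomposition \eqref{Rxyz-3dim}, sets $y=z=\xi$ and uses \eqref{curSl} to get $\rho=\frac12\{(\tau-2)g-(\tau-6)\eta\otimes\eta\}$, then invokes \thmref{thm:ElSlRl}; since \eqref{Rxyz-3dim} is needed anyway for the curvature computation, this costs nothing extra. You instead argue pointwise that $Q\xi=2n\,\xi$ forces $Q$ to preserve $\ker\eta$, and that on the $2$-dimensional contact distribution the commutant of $\f$ is $\{p\,\I+q\,\f\}$, so $\rho=p\,g+q\,\g+c\,\eta\otimes\eta$ automatically; this is a valid computation (tracing then shows $p,q,c$ are smooth, e.g. $p=\tau/2-1$, $q=\ttt/2-1$, a point you should state to justify feeding the triple into \propref{prop:El-Dtau}), and it has the merit of isolating the fact that every $3$-dimensional Sasaki-like manifold is automatically almost $\eta$-Einstein without appealing to the curvature decomposition. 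Your route through \propref{prop:El-Dtau} and \propref{prop:Lvrho} is also slightly more careful than the paper's citation of \thmref{thm:ElSlRl}, since the coefficients in \eqref{rho3dim} are a priori functions of $\tau$ rather than constants, and \propref{prop:El-Dtau} is the statement formulated for functions. Your curvature computations check out: for (ii) you use $R(x,\xi)\xi=x-\eta(x)\xi$ from \eqref{curSl} directly (which, as your argument makes visible, needs no soliton hypothesis at all), while the paper reads it off the reconstructed tensor \eqref{Rxyz-3dim-Rl}; for (i) your bookkeeping with $g(\f x,\f x)=-g(x,x)$ and $\tau=2$ gives numerator $g(x,x)^2+g(x,\f x)^2$ against denominator $-\bigl(g(x,x)^2+g(x,\f x)^2\bigr)$, hence $k=-1$, exactly as in the paper (your basis $\{x,\f x\}$ with $\eta(x)=0$ spans the same plane as the paper's $\{\f x,\f^2x\}$, and non-degeneracy of the section is part of the definition \eqref{sect}, so no issue arises there).
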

\begin{proof}
It is well known that the curvature tensor of a 3-dimensional manifold has the form
\begin{equation}\label{Rxyz-3dim}
\begin{array}{l}
R(x,y)z = g(y,z)Qx - g(x,z)Qy +\rho(y,z)x - \rho(x,z)y\\[4pt]
\phantom{R(x,y)z =}
-\dfrac{\tau}{2}\{g(y,z)x - g(x,z)y\}.
\end{array}
\end{equation}
Then, substituting $y=z=\xi$ and recalling \eqref{curSl}, we have
\begin{equation}\label{rho3dim}
\rho = \frac12\{(\tau-2)g-(\tau-6)\eta\otimes\eta\},
\end{equation}
which means that the manifold is $\eta$-Einstein.
Therefore, because of \thmref{thm:ElSlRl}, we have 
\[
\rho=2\, \eta\otimes\eta,\qquad \tau=\ttt=2.
\]

%
%
Substituting the latter two equalities for $\tau$ and $\rho$ in \eqref{Rxyz-3dim}, we get
\begin{equation}\label{Rxyz-3dim-Rl}
\begin{array}{l}
R(x,y)z = -\,[g(y,z)-2\,\eta(y)\eta(z)]x +2\,g(y,z)\eta(x)\xi \\[4pt]
\phantom{R(x,y)z =}
+[g(x,z) -2\,\eta(x)\eta(z)]y  - 2\,g(x,z)\eta(y)\xi.
\end{array}
\end{equation}

Using a basis $\{\f x,\f^2 x\}$ of an arbitrary  $\f$-holomorphic section, we calculate its sectional curvature by \eqref{sect}, replacing  $x$ and $y$ by $\f x$ and $\f^2 x$, respectively. Then, bearing in mind \eqref{strM} and \eqref{strM2}, we obtain 
$k(\f x,\f^2 x)=-1$. 

Similarly, for a $\xi$-section with a basis $\{x,\xi\}$, we get $k(x,\xi)=1$, which completes the proof.
%
\end{proof}

\begin{remark}
Examples of 3-dimensional Sasaki-like manifolds as a Lie group from type $Bia(VII_0)(1)$,
a matrix Lie group, 
an $S^1$-solvable extension on a K\"{a}hler-Norden 2-manifold, 
 and their geometrical properties are studied 
in \cite{HM-Annu}, \cite{HM-Facta}, \cite{HM-Acta}, \cite{HMMek}, respectively. 
\end{remark}

%

\subsection{Example 2}\label{Ex2}

Let us consider $M$ as a set of points in $\R^3$ with coordinates $(x^1,x^2,x^3)$ 
and let $M$ be equipped with an almost contact B-metric structure defined by
\begin{equation}\label{strEx1-loc}
\begin{array}{c}
g\left(\DD_1,\DD_1\right)=-g\left(\DD_2,\DD_2\right)=\cos 2x^3,\qquad 
g\left(\DD_1,\DD_2\right)=\sin 2x^3,
\\[4pt]
g(\DD_1,\DD_3)=g(\DD_2,\DD_3)=0,\qquad g(\DD_3,\DD_3)=1,
\\[4pt]
\f  \DD_1=\DD_2,\qquad \f \DD_2=-\DD_1,\qquad \xi=\DD_3,
\end{array}
\end{equation}
where $\DD_1$, $\DD_2$, $\DD_3$ denote briefly
$\frac{\DD}{\DD{x^1}}$,  
$\frac{\DD}{\DD{x^2}}$, $\frac{\DD}{\DD{x^3}}$, respectively.
Then, the vectors determined by
\begin{equation}\label{eiloci}
e_1=\cos x^3 \DD_1+\sin x^3 \DD_2,\qquad
e_2=-\sin x^3 \DD_1+\cos x^3 \DD_2,\qquad 
e_3=\DD_3
\end{equation}
form an orthonormal $\f$-basis of $T_pM$, $p\in M$, i.e.
\begin{equation}\label{strEx1}
\begin{array}{c}
g(e_1,e_1)=-g(e_2,e_2)=g(e_3,e_3)=1\\[4pt]
g(e_i,e_j)=0,\quad
i,j\in\{1,2,3\},\; i\neq j,
\\[4pt]
\f  e_1=e_2,\qquad \f e_2=-e_1,\qquad \xi=e_3.
\end{array}
\end{equation}

Immediately from \eqref{eiloci} we obtain the commutators of $e_i$ as follows 
\begin{equation}\label{com}
[e_0,e_1]=e_2,\qquad 
[e_0,e_2]=-e_1,\qquad [e_1,e_2]=0.
\end{equation}

Then, according to Example 1 in \cite{IvMaMa45} for $n=1$, the solvable Lie group 
of dimension $3$
 with a basis of left-invariant vector fields $\{e_1,e_2, e_3\}$
 defined by \eqref{com} and equipped with the $(\f,\xi,\eta,g)$-structure from \eqref{strEx1}
is a Sasaki-like almost contact B-metric manifold.

In the well-known way, we calculate the components of the Levi-Civita connection $\n$ for $g$
and from there the corresponding components 
$R_{ijkl}=R(e_i,e_j,e_k,e_l)$ and $\rho_{ij}=\rho(e_i,e_j)$
of the  curvature tensor $R$ and the Ricci tensor $\rho$, respectively. 
The non-zero ones of them are the following (keep in mind  the symmetries of $R$)
\begin{equation}\label{neij}
\n_{e_1} e_2=\n_{e_2} e_1= -e_3,\qquad
\n_{e_1} e_3= -e_2,\qquad
\n_{e_2} e_3= e_1;
\end{equation}
\begin{equation}\label{Rrho-Ex2}
R_{1221}=R_{1331}=-R_{2332}=1,\qquad \rho_{33}=2.
\end{equation}
The latter equality means that the Ricci tensor has the following form
\begin{equation}\label{rho=2etaeta}
\rho=2\eta\otimes\eta,
\end{equation}
i.e. the manifold is Einstein-like with constants $(a,b,c)=(0,0,2)$. 
Therefore, the scalar curvatures with respect to $g$ and $\g$ are
$\tau=\ttt=2$.

The values of $R_{ijkl}$ in \eqref{Rrho-Ex2} imply for the sectional curvatures
\[
k_{12}=-k_{13}=-k_{23}=-1,
\]
which supports \thmref{thm:dim3}.

Let us consider a vector field, determined by the following 
\begin{equation}\label{v}
\begin{array}{l}
v=v^1e_1 + v^2e_2 + v^3e_3, 
\\[4pt]
v^1=-\{c_1\cos x^3 +c_2\sin x^3\}x^1
+\{c_2\cos x^3 -c_1\sin x^3\}x^2 + \sin x^3,
\\[4pt]
v^2=-\{c_2\cos x^3 -c_1\sin x^3\}x^1
-\{c_1\cos x^3 +c_2\sin x^3\}x^2 + \cos x^3,
\\[4pt]
v^3=c_3,
\end{array}
\end{equation}
and $c_1$, $c_2$, $c_3$ are arbitrary constants.

Using \eqref{eiloci}, \eqref{strEx1}, \eqref{neij} and \eqref{v},  we obtain the following
\begin{equation}\label{nv}
\begin{array}{l}
\n_{e_1} v= -c_1e_1  -(c_2+c_3)e_2 - v^2 e_3,
\\[4pt]
\n_{e_2} v= (c_2+c_3)e_1  -c_1e_2 - v^1 e_3,
\\[4pt]
\n_{e_3} v= v^2e_1  -v^1 e_2,
\end{array}
\end{equation}
that allow us to calculate the components $\left(\LL_v g\right)_{ij}=\left(\LL_v g\right)(e_i,e_j)$ of the Lie derivative $\LL_v g$. Then, we get the following nonzero ones
\[
\begin{array}{l}
\left(\LL_v g\right)_{11}=
-\left(\LL_v g\right)_{22}=-2c_1,\qquad
\left(\LL_v g\right)_{12}=2(c_2+c_3),
\end{array}
\]
which implies that this tensor has the following expression
\begin{equation}\label{LvgEx2}
\LL_v g = -2\,c_1\,g   -2(c_2+c_3)\g   +2(c_1+c_2+c_3)\eta\otimes\eta. 
\end{equation}
Substituting the latter equality and \eqref{rho=2etaeta} in \eqref{defRl-v}, we
obtain that $(M,\f,\xi,\eta,g)$ admits a Ricci-like soliton with potential $v$ determined by \eqref{v}
and the potential constants are 
\[
\lm=c_1,\qquad 
\mu=c_2+c_3,\qquad
\nu=-c_1-c_2-c_3-2.
\]

These results 
are in accordance with \thmref{thm:RlSl-v} and  \thmref{thm:ElSlRl}. The conclusion in  
 \propref{prop:Lvrho} follows from \eqref{rho=2etaeta} 
and the subsequent formula 
$(\LL_v\rho)(x,\xi)=-2g(\f x,v)+2\eta(\n_x v)$, together with the equalities in \eqref{v} and \eqref{nv}.

\section{Gradient almost Ricci-like solitons}

Let us consider a Ricci-like soliton, defined by \eqref{defRl-v} with the condition $\lm$, $\mu$, $\nu$ to be functions on $M$.
If its potential $v$ is a gradient of a differentiable function $f$, i.e. $v=\grad f$, then the soliton is called a \emph{gradient almost Ricci-like soliton} of $\M$. In this case \eqref{defRl-v} is reduced to the following condition
\begin{equation}\label{Hess}
\Hess f+\rho+\lm g+\mu\g +\nu\eta\otimes\eta=0,
\end{equation}
where $\Hess$ denotes the Hessian operator with respect to $g$, i.e. $\Hess f$ is defined by
\begin{equation}\label{Hess2}
(\Hess f)(x,y) := (\n_x \D f)(y)=g(\n_x \grad f,y).
\end{equation}
Taking the trace of \eqref{Hess}, we obtain
 \begin{equation}\label{Lap}
\Delta f+\tau+(2n+1)\lm+\mu+\nu=0,
\end{equation}
where $\Delta:=\tr\circ\Hess$ is the Laplacian operator of $g$.
Also for the Laplacian of $f$, the formula $\Delta f = \Div(\grad f)$ is valid, where $\Div$ stands for the divergence operator.

The
gradient Ricci-like soliton is said to be \emph{trivial} when $f$ is constant. 
Further, we consider only non-trivial gradient Ricci-like solitons.

Equality \eqref{Hess} with the recall of \eqref{Hess2} provides the following
\begin{equation}\label{nv}
\n_x v=-Qx-\lm x-\mu \f x-(\mu+\nu)\eta(x)\xi,
\end{equation}
where $Q$ is the Ricci operator and $v=\grad f$.

\begin{theorem}\label{thm:grad}
Let $\M$ be a Sasaki-like almost contact B-metric  manifold of dimension $2n+1$.
If it admits a gradient almost Ricci-like soliton with functions $(\lm,\mu,\nu)$ and a potential function $f$,
then $\M$ has constant scalar curvatures $\tau=\ttt=2n$ for both B-metrics $g$ and $\g$, respectively, and its Ricci tensor is $\rho=2n\,\eta\otimes\eta$. 
\end{theorem}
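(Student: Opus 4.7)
The plan is to derive the curvature $R(x,\xi)v$ in two complementary ways and match them. Substituting $x=\xi$ in \eqref{nv} and using $Q\xi=2n\xi$ from \eqref{curSl} gives $\n_\xi v=-\sigma\,\xi$ with $\sigma:=2n+\lm+\mu+\nu$. I would then expand
\[
R(x,\xi)v=\n_x\n_\xi v-\n_\xi\n_x v-\n_{[x,\xi]}v
\]
using \eqref{nv}, the Sasaki-like identities $\n_y\xi=-\f y$, $(\n_y\eta)(z)=-g(y,\f z)$, $(\n_\xi\f)y=0$ from \eqref{curSl} and \eqref{defSl}, together with the operator formulas $(\n_x Q)\xi=Q\f x-2n\f x$ and $(\n_\xi Q)y=2Q\f y$ of \propref{prop:nQ-Sl}. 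A parallel direct computation, based on $R(\xi,x)z=g(x,z)\xi-\eta(z)x$ from \eqref{curSl}, yields $R(x,\xi)v=\eta(v)\,x-g(x,v)\,\xi$.

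Equating the two expressions for $R(x,\xi)v$ and taking the $\eta$-component extracts the scalar identity $x(\sigma)=g(x,v)+\bigl(\xi(\sigma)-\eta(v)\bigr)\eta(x)$, which, substituted back into the vectorial equation, cancels the remaining loose $\xi$-terms and leaves
\[
Q\f x + A\,\f x - B\,\f^2 x = 0,
\]
where $A:=2n+\mu+\nu+\xi(\mu)$ and $B:=\mu+\xi(\lm)-\eta(v)$. Composing with $\f$ and invoking the Sasaki-like commutativity $Q\f=\f Q$ (a consequence of \eqref{rho*-Sl}) together with $\f^2=-\I+\eta\otimes\xi$, $\f^3=-\f$ and $\eta\circ Q=2n\,\eta$ rearranges this into
\[
Qx=-A\,x+(2n+A)\,\eta(x)\xi+B\,\f x,
\]
equivalently $\rho=-A\,g+B\,\g+(2n+A-B)\,\eta\otimes\eta$. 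Hence $\M$ is almost Einstein-like with the triple of functions $(-A,\,B,\,2n+A-B)$.

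Now I would apply \propref{prop:El-Dtau}: it forces $B=0$, $A$ constant, $\tau$ constant and $\ttt=2n$. With $A$ constant and $B=0$, the Ricci operator reduces to $Qy=-Ay+(2n+A)\eta(y)\xi$, and a direct differentiation using $\n_\xi\xi=0$ and $(\n_\xi\eta)(y)=0$ gives $(\n_\xi Q)y=0$. Comparing with the identity $(\n_\xi Q)y=2Q\f y=-2A\,\f y$ of \propref{prop:nQ-Sl} forces $A=0$, whence $\rho=2n\,\eta\otimes\eta$ and $\tau=\ttt=2n$, as required.

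The main obstacle is the reduction in the second paragraph: unlike the constant-coefficient setting of Section~3, the derivatives $\xi(\lm)$ and $\xi(\mu)$ now pervade every term of the Ricci identity, and it is not transparent a priori that they regroup into precisely the two scalar quantities $A$ and $B$ exhibiting the almost Einstein-like form. Once this regrouping is secured, \propref{prop:El-Dtau} plus the single derivative identity of \propref{prop:nQ-Sl} closes the argument with essentially one further line of algebra.
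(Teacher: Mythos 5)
Your proposal is correct, and its first half is essentially the paper's own argument: you compute the curvature applied to the potential in the slot containing $\xi$ twice --- once from the soliton equation via the covariant derivative of $v$ and \propref{prop:nQ-Sl}, once from the Sasaki-like identity $R(\xi,y)z=g(y,z)\xi-\eta(z)y$ --- take the $\eta$-component to get $\D(\lm+\mu+\nu-f)(y)=\D(\lm+\mu+\nu-f)(\xi)\eta(y)$, and substitute back; your relation $Q\f x+A\,\f x-B\,\f^2x=0$ is exactly the paper's \eqref{Qfi-aSl}, and inverting it with $Q\f=\f Q$ gives the same almost Einstein-like expression as \eqref{rho-v}--\eqref{abc-aElSlRl}, so the ``regrouping'' you flag as the main obstacle does go through and agrees with the paper. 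Where you genuinely diverge is the endgame. The paper does not invoke \propref{prop:El-Dtau}: it traces the full formula \eqref{Rxyv-aSl} over $x$, evaluates $\rho(\xi,v)$ via $\rho(\cdot,\xi)=2n\,\eta$ and \eqref{DtauDttt2-Sl} to get a second expression \eqref{ttt-Df} for $\ttt$, and compares it with \eqref{tauEl-Df} to force $B=0$, $\ttt=2n$, then $\tau=2n$ and $A=0$; this stays self-contained in Section 4 and yields the explicit relations $\D(\lm-f)(\xi)=-\mu$ and $\D\mu(\xi)=-\mu-\nu-2n$ among the soliton functions. You instead hand the constancy statement to \propref{prop:El-Dtau} (legitimate, since the coefficients in $\rho=a\,g+b\,\g+c\,\eta\otimes\eta$ are pointwise unique and the proposition is proved independently of Section 4), obtaining $B=0$, $A=\mathrm{const}$, $\ttt=2n$, and then kill $A$ by comparing the direct computation $(\n_\xi Q)y=0$ for a constant-coefficient $\eta$-Einstein structure (using $\n_\xi\xi=0$, $(\n_\xi\eta)=0$) with \eqref{nxiQ=Q-aSl}, which gives $2Q\f y=-2A\,\f y$ and hence $A=0$ because $\f$ has rank $2n$. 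This last step is sound and is in fact independent of the soliton hypothesis: it shows directly that any Sasaki-like manifold that is $\eta$-Einstein with constants must have constants $(0,0,2n)$, i.e.\ it re-derives the content of \corref{cor:012n}. In short, your route buys a shorter conclusion by reusing the Section 1 machinery (whose proofs already contain the divergence and \eqref{DtauDttt-Sl} computations), at the cost of recovering the relations on $\lm,\mu,\nu,f$ only implicitly as $A=B=0$, while the paper's contraction argument keeps everything explicit and local to the gradient-soliton setting.
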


\begin{proof}
Using \eqref{nv}, we compute the following curvature tensor
\begin{equation}\label{Rxyv-aSl}
\begin{array}{l}
R(x,y)v=-\left(\n_x Q\right)y+\left(\n_y Q\right)x\\[4pt]
\phantom{R(x,y)v=}
+\{\D\lm(y)+\mu\eta(y)\}x-\{\D\lm(x)+\mu\eta(x)\}y\\[4pt]
\phantom{R(x,y)v=}
+\{\D\mu(y)+(\mu+\nu)\eta(y)\}\f x-\{\D\mu(x)+(\mu+\nu)\eta(x)\}\f y\\[4pt]
\phantom{R(x,y)v=}
+\D(\mu+\nu)(y)\eta(x) \xi- \D(\mu+\nu)(x)\eta(y) \xi.
\end{array}
\end{equation}

The latter expression implies the following equality 
\begin{equation*}\label{R1-}
\begin{array}{l}
R(\xi,y)v=-\left(\n_\xi Q\right)y+\left(\n_y Q\right)\xi+\{\D\lm(\xi)+\mu\}\f^2 y-\{\D\mu(\xi)+\mu+\nu\}\f y\\[4pt]
\phantom{R(\xi,y)v=}
+\D(\lm+\mu+\nu)(y)\xi-\D(\lm+\mu+\nu)(\xi)\eta(y) \xi,
\end{array}
\end{equation*}
where we apply \eqref{nQxiQ-aSl} and \eqref{nxiQ=Q-aSl} and get 
\begin{equation}\label{R1}
\begin{array}{l}
R(\xi,y)v=-Q\f y +\{\D\lm(\xi)+\mu\}\f^2y-\{\D\mu(\xi)+\mu+\nu+2n\}\f y\\[4pt]
\phantom{R(\xi,y)v=3Q\f y}
+\D(\lm+\mu+\nu)(y)\xi-\D(\lm+\mu+\nu)(\xi)\eta(y) \xi.
\end{array}
\end{equation}

We put $z=v$ in the equality for $R(\xi,y)z$ in \eqref{curSl} and obtain the following expression 
\begin{equation}\label{Rxiyv-aSl}
R(\xi,y)v=\D f(y)\xi-\D f(\xi)y.
\end{equation}

Combining \eqref{R1} and \eqref{Rxiyv-aSl}, we find the following formula
\begin{equation}\label{nxiQ-aSl}
\begin{array}{l}
Q\f y=\{\D(\lm-f)(\xi)+\mu\}\f^2y-\{\D\mu(\xi)+\mu+\nu+2n\}\f y\\[4pt]
\phantom{Q\f y=}
+\D(\lm+\mu+\nu-f)(y)\xi-\D(\lm+\mu+\nu-f)(\xi)\eta(y) \xi.
\end{array}
\end{equation}

We apply $\eta$ of equality \eqref{nxiQ-aSl} and since $Q\circ\f=\f\circ Q$ for a Sasaki-like manifold, we obtain the following
\begin{equation}\label{lmnf+=-aSl}
\D(\lm+\mu+\nu-f)(y)=\D(\lm+\mu+\nu-f)(\xi)\eta(y),
\end{equation}
which changes \eqref{nxiQ-aSl} and \eqref{R1} as follows
\begin{equation}\label{Qfi-aSl}
Q\f y=\{\D(\lm-f)(\xi)+\mu\}\f^2y-\{\D\mu(\xi)+\mu+\nu+2n\}\f y,
\end{equation}
\begin{equation}\label{R1=}
\begin{array}{l}
R(\xi,y)v=-Q\f y +\{\D\lm(\xi)+\mu\}\f^2y-\{\D\mu(\xi)+\mu+\nu+2n\}\f y\\[4pt]
\phantom{R(\xi,y)v=-Q\f y}
+\{\D f(y)-\D f(\xi)\eta(y)\}\xi
\end{array}
\end{equation}
and therefore we have
\begin{equation}\label{Rxiyvz}
\begin{array}{l}
R(\xi,y,v,z)\,{=} - \rho(y,\f z) +\{\D\lm(\xi)+\mu\}g(\f y,\f z) 
\\[4pt]
\phantom{R(\xi,y,v,z)\,{=}}
- \{\D\mu(\xi)+\mu+\nu+2n\}g( y,\f z)\\[4pt]
\phantom{R(\xi,y,v,z)\,{=}}
+\{\D f(y)-\D f(\xi)\eta(y)\}\eta(z).
\end{array}
\end{equation}
%
%
%

On the other hand, the expression of $R(x,y)\xi$ from \eqref{curSl} and equality \eqref{Rxyv-aSl} imply respectively the following two equalities
\begin{gather}
R(x,y,\xi,v)=\D f(x)\eta(y)-\D f(y)\eta(x),\label{R3}
\\[4pt]
\begin{array}{l}\label{R4}
R(x,y,v,\xi)=-\eta\left((\n_x Q)y-(\n_y Q)x\right)+\D(\lm+\mu+\nu)(y)\eta(x)\\[4pt]
\phantom{R(x,y,v,\xi)=-\eta\left((\n_x Q)y-(\n_y Q)x\right)}
-\D(\lm+\mu+\nu)(x)\eta(y).
\end{array}
\end{gather}
By summation of the latter two equalities, we find the following formula
\begin{equation}\label{etaQ-aSl}
\begin{array}{l}
\eta\bigl((\n_x Q)y-(\n_y Q)x\bigr)=-\,\D(\lm+\mu+\nu-f)(x)\eta(y)\\[4pt]
\phantom{\eta\left((\n_x Q)y-(\n_y Q)x\right)=}
+\D(\lm+\mu+\nu-f)(y)\eta(x),
\end{array}
\end{equation}
which because of \eqref{lmnf+=-aSl} is simplified to the following form
\begin{equation}\label{etaQ=00-aSl}
\eta\bigl((\n_x Q)y-(\n_y Q)x\bigr)=0.
\end{equation}

%

On the other hand, the expression of $R(\xi,y)z$ from \eqref{curSl} 
yield
\[
R(\xi,y,z,v)\,{=}-\D f(\xi)g(\f y, \f z)-\{\D f(y)-\D f(\xi)\eta(y) \}\eta(z),
\]
which together with \eqref{Rxiyvz} and the form of $\rho(x,\xi)$ from \eqref{curSl} implies
\begin{equation}\label{rho-v}
\begin{array}{l}
\rho(y,z)=\{\D\mu(\xi)+\mu+\nu+2n\}g(\f y, \f z)+\{\D(\lm-f)(\xi)+\mu\}g(y,\f z)\\[4pt]
\phantom{\rho(y,z)=}
+2n\eta(y)\eta(z).
\end{array}
\end{equation}
The latter equality can be rewritten in the form
\[
\begin{array}{l}
\rho=-\,\{\D\mu(\xi)+\mu+\nu+2n\}g+\{\D(\lm-f)(\xi)+\mu\}\g\\[4pt]
\phantom{\rho=}
+\{4n+\nu-\D(\lm-\mu-f)(\xi)\}\eta\otimes\eta,
\end{array}
\]
which means that the manifold is almst Einstein-like with coefficient functions 
\begin{equation}\label{abc-aElSlRl}
\begin{array}{c}
a\,{=}-\D\mu(\xi)-\mu-\nu-2n,\qquad 
b=\D(\lm-f)(\xi)+\mu,\\[4pt]
c\,{=}-\D(\lm-\mu-f)(\xi)+\nu+4n.
\end{array}
\end{equation}
Then, using \eqref{tauEl}, we obtain
\begin{equation}\label{tauEl-Df}
\tau\,{=}-2n\{\D\mu(\xi)+\mu+\nu+2n-1\},\qquad \ttt=2n\{\D(\lm-f)(\xi)+\mu+1\}.
\end{equation}

Contracting \eqref{Rxyv-aSl} with respect to $x$, we obtain
\[
\rho(y,v)=\frac12 \D\tau(y)+2n\, \D\lm(y)+\D(\mu+\nu)(y)-\D\mu(\f y)-\{\D(\mu+\nu)(\xi)-2n\,\mu\}\eta(y)
\]
and consequently for $y=\xi$ we have
\begin{equation}\label{rhoxiv-Df}
\rho(\xi,v)=\frac12 \D\tau(\xi)+2n\, \D\lm(\xi)+2n\,\mu.
\end{equation}
We compute the left side of \eqref{rhoxiv-Df} by the formula $\rho(x,\xi)=2n\, \eta(x)$ from \eqref{curSl} 
and then from \eqref{rhoxiv-Df} and \eqref{DtauDttt2-Sl}, we obtain
\begin{equation}\label{ttt-Df}
\ttt=-2n \{\D(\lm-f)(\xi)+\mu-1\}.
\end{equation}
Comparing the latter equality with \eqref{tauEl-Df}, we have
\[
\D(\lm-f)(\xi)=-\mu,
\]
\[
\ttt=2n.
\]
The former equality implies $b=0$ in \eqref{abc-aElSlRl} and therefore the manifold is almost $\eta$-Einstein 
and the latter one 
means that $\D\ttt=0$ and using \eqref{DtauDttt2-Sl}, we obtain for $\tau$ the following
\[
\tau=2n.
\] 
Then, substituting the value of $\tau$ in \eqref{tauEl-Df}, we obtain
\[
\D\mu(\xi)=-\mu-\nu-2n,
\]
which implies $a=0$ in \eqref{abc-aElSlRl} and finally we get $(a,b,c)=(0,0,2n)$.
\end{proof}



\subsection{Example 3}

Let $\M$ be the 3-dimensional Sasaki-like manifold, given in Example~2 of \S\ref{Ex2}.
Now, let $f$ be a differentiable function on $M$, defined by
\begin{equation}\label{f}
f=-\frac{1}{2}s\,\{(x^1)^2+(x^2)^2\} +x^2 + t\,x^3
\end{equation}
for arbitrary constants $s$ and $t$.
Then, the gradient of $f$ with respect to the B-metric $g$ is the following
\begin{equation}\label{gradf}
\begin{array}{l}
\grad f = -\,\{s\,x^1\cos x^3 + (s\,x^2-1)\sin x^3\}e_1\\[4pt]
\phantom{\grad f =}
+\{s\,x^1\sin x^3 - (s\,x^2-1)\cos x^3\}e_2 + t\, e_3.
\end{array}
\end{equation}

Using \eqref{eiloci}, we compute 
the  components of 
$\LL_{\grad f}g$ as follows
\[
\left(\LL_{\grad f}g\right)_{11}=-\left(\LL_{\grad f}g\right)_{22}=-2s,\qquad
\left(\LL_{\grad f}g\right)_{12}=2t,
\] 
which give us the following expression 
\[
\left(\LL_{\grad f}g\right)=-2s\, g -2t \g +2(s+t)\,\eta\otimes\eta.
\]

The latter equality coincides with \eqref{LvgEx2} for $s=c_1$, $t=c_2+c_3$. 
Therefore, 
$(M,\f,\xi,\eta,g)$ admits a Ricci-like soliton with potential $v=\grad f$ determined by \eqref{gradf}
and the potential constants are 
\[
\lm=s,\qquad 
\mu=t,\qquad
\nu=-s-t-2.
\]

In conclusion, the constructed 3-dimensional example of a Sasaki-like manifold 
with $\tau=\ttt=2$ and gradient Ricci-like soliton   
supports also \thmref{thm:grad}.

\subsection*{Acknowledgment}
The author was supported by projects MU19-FMI-020 and FP19-FMI-002 of the Scientific Research Fund,
University of Plovdiv Paisii Hilendarski, Bulgaria.

\end{document}